\newtheorem{maintheorem}{Theorem}
\newtheorem{theorem}{Theorem}
\newtheorem{lemma}[theorem]{Lemma}
\newtheorem{proposition}[theorem]{Proposition}
\newtheorem{corollary}[theorem]{Corollary}
\newcommand{\N}{\mathbb{N}}
\newcommand{\R}{\mathbb{R}}
\newcommand{\Q}{\mathbb{Q}}
\newcommand{\C}{\mathbb{C}}
\renewcommand{\P}{\mathbb{P}}
\newcommand{\Norm}{\mathfrak{N}}
\newcommand{\p}{\mathfrak{p}}
\renewcommand{\O}{\mathcal{O}}
\newcommand{\ida}{\mathfrak{a}}
\newcommand{\idb}{\mathfrak{b}}
\newcommand{\idc}{\mathfrak{c}}
\DeclareMathOperator{\ideal}{\mathfrak{J}}
\DeclareMathOperator{\Vol}{Vol}
\DeclareMathOperator{\Lip}{Lip}
\title[Rational points over number fields on a singular cubic surface]{Counting rational points over number fields on a singular cubic surface}
\author{Christopher Frei}
\address{Mathematisches Institut, LMU M\"unchen, Theresienstr. 39, 80333 M\"unchen, Germany}
\email{frei@math.lmu.de}
\urladdr{http://www.mathematik.uni-muenchen.de/~frei}
\subjclass[2010]{Primary 11D45; Secondary 14G05}
\begin{document}
\numberwithin{equation}{section}
\numberwithin{theorem}{section}
\begin{abstract}
A conjecture of Manin predicts the distribution of $K$-rational points on certain algebraic varieties defined over a number field $K$. In recent years, a method using universal torsors has been successfully applied to several hard special cases of Manin's conjecture over the field $\Q$. Combining this method with techniques developed by Schanuel, we give a proof of Manin's conjecture over arbitrary number fields for the singular cubic surface $S$ given by the equation $x_0^3 = x_1 x_2 x_3$. 
\end{abstract}

\maketitle
\tableofcontents

\section{Introduction}
We consider the cubic surface $S \subseteq \P^3$ defined over any number field $K$ by the equation
\[x_0^3 = x_1 x_2 x_3\text.\]
It is toric, has three singular points $(0:1:0:0)$, $(0:0:1:0)$, $(0:0:0:1)$, and contains three lines $L_i := \{x_0 = x_i = 0\}$, for $i \in \{1, 2, 3\}$. The set $S(K)$ of $K$-rational points on $S$ is infinite.

The Weil height of ${\bf x} = (x_0 : x_1 : x_2 : x_3) \in \P^3(K)$ is defined by 
\[H({\bf x}) = \prod_{\nu \in M(K)} \max\{|x_0|_\nu, |x_1|_\nu, |x_2|_\nu, |x_3|_\nu\}^{d_\nu}\text.\]
Here, $M(K)$ is the set of places of $K$, the absolute values $|\cdot |_\nu$ are normalized such that they extend the usual absolute values on $\Q$, and $d_\nu$ is the local degree $[K_\nu : \Q_p]$, if $\nu$ extends the place $p$ of $\Q$. 

It is well known that there are only finitely many points of bounded height in $\P^3(K)$, so it makes sense to study the number of $K$-rational points on $S$ of height bounded by $B$, as $B$ tends to infinity. A generalization of a conjecture by Manin \cite{Franke1989, Batyrev1998b}, applied to our case, links the asymptotic behavior of this quantity to geometric features of $S$, provided that we exclude the points lying on the lines $L_i$. Indeed, the number of $K$-rational points of bounded height on these lines dominates the number of $K$-rational points on the rest of $S$, whereas much of the geometric information about $S$ would be lost when considering just the lines.

Therefore, we denote by $U$ the complement of the three lines in $S$ and define the counting function
\[N(B) := |\{{\bf x} \in U(K) \mid H({\bf x}) \leq B\}|\text.\]
Here, $U(K)$ is the set of $K$-rational points on $U$. The above-mentioned generalization of Manin's conjecture \cite{Franke1989, Batyrev1998b} to Fano varieties with at worst canonical singularities predicts in this case that
\[N(B) \sim c B (\log B)^6\text,\]
with a positive leading constant $c = c_{S, K, H}$. A conjectural interpretation of the leading constant in Manin's conjecture was given by Peyre \cite{Peyre1995} and extended to Fano varieties with at worst canonical singularities by Batyrev and Tschinkel \cite{Batyrev1998b}. When writing ``Manin's conjecture'', we implicitly include the conjecture about the leading constant.

Manin's conjecture has been proved for smooth toric varieties over arbitrary number fields by Batyrev and Tschinkel \cite{Batyrev1998}, studying the height zeta function with the help of Fourier analysis. In \cite{Batyrev1998b} they explain how this result can be applied to prove Manin's conjecture for our singular surface $S$. Similar methods work for other varieties that are equivariant compactifications of certain algebraic groups (e.g. \cite{Chambert-Loir2002}).

Salberger \cite{Salberger1998} gave a new proof of Manin's conjecture for split toric varieties over the field $\Q$ of rational numbers by a fundamentally different approach using universal torsors. These were first introduced by Colliot-Th\'el\`ene and Sansuc \cite{Colliot1980, Colliot1987} to study the Hasse principle. In the context of Manin's conjecture, the basic idea is to find a parametrization of the rational points on the variety under consideration that makes it feasible to count them by analytic number theory. 

Based on Salberger's ideas, proofs were found for several hard special cases of Manin's conjecture over $\Q$, to which the methods of Batyrev and Tschinkel can not be applied (e.g. \cite{Baier2011, Breteche2002, Breteche2008, Breteche2007c, Breteche2004, Breteche2012, Browning2009, Boudec2011a}). For our surface $S$, independent proofs of Manin's conjecture over $\Q$ were given by de la Bret\`eche \cite{Breteche1998}, Fouvry \cite{Fouvry1998}, Salberger \cite{Salberger1998}, Heath-Brown and Moroz \cite{Heath-Brown1999}, and de la Bret\`eche and Swinnerton-Dyer \cite{Breteche2007}, with the help of such parametrizations. The best error terms have been obtained in \cite{Breteche1998, Breteche2007}. 

In a first attempt to generalize universal torsor techniques to number fields other than $\Q$, Derenthal and Janda \cite{Derenthal2011} modified the approach by Heath-Brown and Moroz \cite{Heath-Brown1999} and successfully applied it to the case of imaginary quadratic number fields of class number $1$.

In this article, we combine the method of Derenthal and Janda with ideas developed by Schanuel \cite{Schanuel1979} and apply it to arbitrary number fields. To the author's best knowledge, this is the first example of universal torsor techniques applied to a special case of Manin's conjecture over general number fields, aside from Schanuel's result for $\P^n$. Hopefully, similar approaches will lead to results for non-toric varieties. 

Before we state the theorem, let us fix some notation: By $\Delta_K$, $h_K$, $R_K$, and $\omega_K$, we denote the discriminant, class number, regulator, and number of roots of unity of $K$. Moreover, $r$ and $s$ denote the number of real and complex places of $K$, and $q := r+s-1$. We write $\O_K$ for the ring of integers of $K$ and $\Norm\ida$ for the absolute norm of the nonzero fractional ideal $\ida$ of $K$.

\begin{maintheorem}\label{main_theorem}
For every number field $K$, we have
\[N(B) = c_K B (\log B)^6 + O(B (\log B)^5),\]
for $B \geq e$. Here, the implicit $O$-constant depends on $K$, and
\[c_K := \frac{9^q}{4\cdot 6!}\left(\frac{2^r (2\pi)^s}{\sqrt{|\Delta_K|}}\right)^9\left(\frac{h_K R_K}{\omega_K}\right)^7 \prod_{\p}\left(1 - \frac{1}{\Norm\p}\right)^7\left(1+\frac{7}{\Norm\p} + \frac{1}{\Norm\p^2}\right)\text,\]
where the product runs over all nonzero prime ideals $\p$ of $\O_K$.
\end{maintheorem}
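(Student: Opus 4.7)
The plan is to prove Theorem~\ref{main_theorem} by combining a universal-torsor parametrization of $U(K)$ with Schanuel-style lattice-point counting.

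First, I would find an explicit parametrization of the $K$-rational points on $U$. Using the relation $x_0^3 = x_1 x_2 x_3$ together with unique factorization of ideals in $\O_K$, I expect to obtain, after choosing representatives for the ideal class group and accounting for the action of $\O_K^\times$, a bijection between $U(K)$ and equivalence classes of tuples consisting of six pairwise (suitably) coprime integral ideals and seven algebraic-integer generators satisfying a single multiplicative identity derived from $x_0^3 = x_1 x_2 x_3$. Concretely, extracting the pairwise ideals $\idb_{ij} = (x_i) + (x_j)$ for distinct $i,j \in \{1,2,3\}$ from $(x_i)$ and examining the residual factors in $(x_0)^3 = (x_1)(x_2)(x_3)$ leads to the six ideals and their generators, in the natural generalization to an arbitrary number field of the Heath-Brown--Moroz / Derenthal--Janda parametrization over $\Q$ and imaginary quadratic fields.

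Second, I would substitute this parametrization into $N(B)$, encode the coprimality constraints by Möbius inversion on ideals, and interchange the order of summation. For each fixed tuple of ideals, the inner count is over admissible generators modulo the unit action with corresponding projective point of height at most $B$. This is a Schanuel-type lattice-point count in $\prod_{\nu \mid \infty} K_\nu \cong \R^r \times \C^s$, where the region is cut out by the height inequality and a chosen fundamental domain for the units in their logarithmic embedding. As in Schanuel's work, the main term is the volume of this region, and each of the seven successive summations over generators contributes a factor $h_K R_K / \omega_K$ together with a power of the archimedean density $2^r(2\pi)^s / \sqrt{|\Delta_K|}$. The remaining sum over ideals evaluates, by standard manipulations with Dirichlet series and residues of $\zeta_K$, to the Euler product
\[\prod_\p \left(1 - \frac{1}{\Norm\p}\right)^7 \left(1 + \frac{7}{\Norm\p} + \frac{1}{\Norm\p^2}\right),\]
while the archimedean integration over the logarithmic unit space (of dimension $q = r+s-1$) produces the combinatorial constant $9^q/(4 \cdot 6!)$ and the logarithmic factor $(\log B)^6$.

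The main obstacle will be twofold. First, making the torsor parametrization precise enough that the class-group and unit information flow correctly through every step, so that the seven copies of $h_K R_K / \omega_K$, the nine copies of $2^r(2\pi)^s/\sqrt{|\Delta_K|}$, and the factor $9^q/(4 \cdot 6!)$ emerge with the correct exponents and multiplicities matching Peyre's prediction. Second, controlling error terms in the lattice-point count uniformly in the six-dimensional Möbius sum, so that the claimed error $O(B(\log B)^5)$ survives: this requires Schanuel-type estimates with explicit dependence on the shape and size of the bounding region, analogous to the Lipschitz-based error estimates used in the classical treatment of $\P^n(K)$, and careful truncation of the Möbius sum outside a natural range so that the tails contribute acceptable error.
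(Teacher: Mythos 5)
Your plan follows essentially the same route as the paper: a Heath-Brown--Moroz/Derenthal--Janda torsor parametrization generalized to arbitrary $K$ (the paper uses nine coordinates modulo an action of $(\O_K^\times)^7$, obtained after first passing through the birational map to $\P^2$), Möbius inversion over the coprimality conditions, Schanuel-style lattice-point counting in $\R^r\times\C^s$ against a fundamental domain for the logarithmic unit lattice with Lipschitz-controlled error terms (the paper uses Widmer's refinement so the error reflects all constituent lattices, which is what makes the Möbius sum of error terms converge for $d\geq 2$ without truncation), and evaluation of the ideal sum as the stated Euler product. You also correctly identify the two genuine difficulties -- disentangling the non-diagonal $(\O_K^\times)^7$-action and making the error terms summable over the Möbius inversion -- which the paper resolves in Section 2.2 and via Lemma \ref{rho_omega_converge}, respectively.
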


\subsection{The leading constant}
Let us check the leading constant $c_K$ in Theorem \ref{main_theorem} against the expected one. According to \cite[Section 3.4, Step 4]{Batyrev1998b}, it should have the form 
\[\frac{\gamma_{\mathcal{K}^{-1}}(U) \delta_{\mathcal{K}^{-1}}(U) \tau_{\mathcal{K}^{-1}}(U)}{6!}\text,\]
where $\gamma_{\mathcal{K}^{-1}}(U)$ is the volume of a certain polytope depending only on $U$, $\delta_{\mathcal{K}^{-1}}(U)$ is a cohomological invariant, and $\tau_{\mathcal{K}^{-1}}(U)$ is a generalized version of the Tamagawa number introduced by Peyre \cite{Peyre1995} for smooth Fano varieties.

Derenthal and Janda \cite[Section 3]{Derenthal2011} computed these constants for our $U$ over arbitrary number fields $K$, using a minimal desingularization $\tilde S$ of $S$ constructed by blow-ups of $\P^2$ in six rational points: We have  $\delta_{\mathcal{K}^{-1}}(U) = 1$, and, as already given in \cite[Section 5.3]{Batyrev1998b}, $\gamma_{\mathcal{K}^{-1}}(U) = 1/36$. The Tamagawa number $\tau_{\mathcal{K}^{-1}}(U)$ is an adelic invariant given as a product of local densities with certain convergence factors
\[\tau_{\mathcal{K}^{-1}}(U) = \left(\frac{2^r(2\pi)^s h_K R_K}{\omega_K \sqrt{|\Delta_K|}}\right)^7|\Delta_K|^{-1}\prod_{\nu \mid \infty}\omega_{\mathcal{K}^{-1},\nu}(\tilde{S}(K_\nu))\prod_{\nu \nmid \infty}\lambda_\nu^{-1}\omega_{\mathcal{K}^{-1},\nu}(\tilde{S}(K_\nu))\text.\]
For the Archimedean densities, we have
\[\omega_{\mathcal{K}^{-1}, \nu}(\tilde{S}(K_\nu)) = 36\text{, if $K_\nu = \R$,\quad and \quad}\omega_{\mathcal{K}^{-1}, \nu}(\tilde{S}(K_\nu)) = 36\pi^2\text{, if $K_\nu = \C$.}\]
The non-Archimedean density at the place $\nu$ corresponding to the prime ideal $\p$ of $\O_K$ is given by
\[\lambda_{\nu}^{-1}\omega_{\mathcal{K}^{-1}, \nu}(\tilde{S}(K_\nu)) = \left(1 - \frac{1}{\Norm \p}\right)^{7}\left(1 + \frac{7}{\Norm \p} + \frac{1}{\Norm\p^2}\right)\text.\]
Putting this together, we see that the constant $c_K$ in Theorem \ref{main_theorem} is as expected.

\subsection{More notation}
The ideal class of a nonzero fractional ideal $\ida$ of $K$ is denoted by $[\ida]$. We write $P_K$ for the group of nonzero principal fractional ideals of $K$. We denote the real embeddings by $\sigma_1$, $\ldots$, $\sigma_r : K \to \R$ and the complex embeddings by $\sigma_{r+1}$, $\overline{\sigma}_{r+1}$, $\ldots$, $\sigma_{r+s}$, $\overline{\sigma}_{r+s} : K \to \C$. The component-wise continuation of $\sigma_i$ to $K^n$ is also denoted by $\sigma_i$. If $\nu$ is the place corresponding to $\sigma_i$ then we put $d_i := d_\nu$. When convenient, we write $\alpha^{(i)} := \sigma_i(\alpha)$, for $\alpha \in K$. If $\ida$, $\idb$ are fractional ideals of $K$, we put $(\ida, \idb) := \ida + \idb$. For any point ${\bf x} = (x_0, \ldots, x_n) \in K^{n+1}$, let $\ideal({\bf x}) := (x_0\O_K, \ldots, x_n \O_K)$. Then, for ${\bf x} \in K^4$,
\[H({\bf x}) = \Norm\ideal({\bf x})^{-1}\prod_{i=1}^{r+s} \max\{|x_0^{(i)}|, |x_1^{(i)}|, |x_2^{(i)}|, |x_3^{(i)}|\}^{d_i}\text.\]

We fix, once and for all, a system of fundamental units of $\O_K$, and denote by $\mathcal{F}$ the multiplicative subgroup of $K^\times$ generated by this system. Then $\mathcal{F}$ is a free Abelian group of rank $q$, and the unit group $\O_K^\times$ is the direct product $\O_K^\times = \mu_K \mathcal{F}$, where $\mu_K$ is the group of roots of unity in $K$.

Moreover, we fix, once and for all, a system $\mathcal{C}$ of integral representatives for the ideal classes of $\O_K$, that is a set of $h_K$ nonzero ideals of $\O_K$, one from every ideal class.  

\section{Passing to a universal torsor}
In this section, we find a parametrization of the rational points of bounded height on $U$ by (almost) integral points on an open subset of $\mathbb{A}_K^9$, subject to some height- and coprimality conditions, and up to a certain action of $(\O_K^\times)^7$. This parametrization has the merit that, due to the coprimality conditions, the non-Archimedean parts of the height conditions are trivial.

Over $\Q$ and imaginary quadratic number fields, the action of $(\O_K^\times)^7$ makes no problems, since then $\O_K^\times$ is finite. In general, that is not the case; this is one of the main difficulties which we have to overcome.  

While we will use purely number-theoretic arguments, it should be mentioned that the open subset of $\mathbb{A}^9$ is a universal torsor over $S$, and that our construction is motivated by geometric considerations (see \cite{Derenthal2011}). The choice of indices might seem slightly counter-intuitive at the beginning. It is, however, closely related to  those geometric considerations and will lead to a rather symmetric result.

\subsection{Parametrization}
Let $\Psi_0 : K^3 \to K^4$ be given by
\[\Psi_0(x_{23} , x_{31} , x_{12}) = (x_{12}x_{23}x_{31} , x_{12}x_{31}^2 , x_{23}x_{12}^2 , x_{31}x_{23}^2)\text.\]
We will also consider $\Psi_0$ as a rational map $\P^2 \dasharrow \P^3$. Let $W \subseteq \P^2$ be the open subset
\[W = \{(x_{23} : x_{31} : x_{12}) \in \P^2 \mid x_{12}x_{23}x_{31} \neq 0\}\text.\]
Then $\Psi_0$ induces a bijection between $W(K) \subseteq \P_2(K)$ and $U(K) \subseteq \P_3(K)$ with inverse $(x_0:x_1:x_2:x_3) \mapsto (x_0^2:x_0x_1:x_1x_2)$. Therefore, 
\begin{equation}\label{NBeqVKB}
N(B) = |\{{\bf x} \in W(K) \mid H(\Psi_0({\bf x})) \leq B \}|\text.
\end{equation}

Whenever indices $j$, $k$, $l$ appear in an expression, this expression is understood to hold for all $(j, k, l)\in \{(1, 2, 3), (2, 3, 1), (3, 1, 2)\} =: A$.

\begin{lemma}\label{lemma_ideal_decomposition}
Let $\idb_1$, $\idb_2$, $\idb_3$ be nonzero ideals of $\O_K$, and let $\idc := (\idb_1, \idb_2, \idb_3)$. Then there exist unique nonzero ideals $\ida_1$, $\ida_2$, $\ida_3$, $\ida_{12}$, $\ida_{21}$, $\ida_{23}$, $\ida_{32}$, $\ida_{31}$, $\ida_{13}$ of $\O_K$ such that
\begin{equation}\label{bj_decomposition}
\idb_j = \idc\cdot \ida_{jk}\cdot \ida_{k}^2\cdot \ida_{lk}\cdot \ida_{j}\cdot \ida_{kj}\text,
\end{equation}
and such that the following coprimality conditions hold:

\begin{minipage}{0.3\linewidth} 
\begin{align}
\label{coprim_j_1} (\ida_k, \ida_j) &= \O_K\\
\label{coprim_j_2} (\ida_k, \ida_{kj}) &= \O_K\\
\label{coprim_j_3} (\ida_k, \ida_{jl}) &= \O_K
\end{align}
\vspace{0.2cm}
\end{minipage}
\hspace{0.2cm}
\begin{minipage}{0.3\linewidth} 
\begin{align}
\label{coprim_j_4} (\ida_k, \ida_{lj}) &= \O_K\\
\label{coprim_j_5} (\ida_k, \ida_{kl}) &= \O_K\\
\label{coprim_kj_1} (\ida_{lk}, \ida_{jk}) &= \O_K
\end{align}
\vspace{0.2cm}
\end{minipage}
\hspace{0.2cm}
\begin{minipage}{0.3\linewidth} 
\begin{align}
\label{coprim_kj_2} (\ida_{lk}, \ida_{lj}) &= \O_K\\
\label{coprim_kj_3} (\ida_{lk}, \ida_{jl}) &= \O_K\\
\label{coprim_jk_1} (\ida_{jk}, \ida_{kl}) &= \O_K\text.
\end{align}
\vspace{0.2cm}
\end{minipage}
Conversely, given ideals $\ida_k$, $\ida_{jk}$, $\ida_{lk}$ as in \eqref{coprim_j_1} -- \eqref{coprim_jk_1}, the ideals $\idb_j$ defined by \eqref{bj_decomposition} satisfy $(\idb_1, \idb_2, \idb_3) = \idc$. 
\end{lemma}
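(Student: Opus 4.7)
The plan is to reduce everything to a prime-by-prime verification, using unique factorization of ideals in the Dedekind domain $\O_K$. As a first step I would divide out by $\idc$: since $\idc$ divides each $\idb_j$, setting $\idb_j' := \idb_j\idc^{-1}$ yields integral ideals with $(\idb_1', \idb_2', \idb_3') = \O_K$, and the factor $\idc$ disappears from \eqref{bj_decomposition}. Thus we may assume throughout that $\idc = \O_K$.

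Next, fix a prime $\p$ of $\O_K$ and set $e_j := v_\p(\idb_j)$, $v_j := v_\p(\ida_j)$, $v_{jk} := v_\p(\ida_{jk})$. The assumption $(\idb_1,\idb_2,\idb_3) = \O_K$ gives $\min(e_1,e_2,e_3) = 0$, and every statement of the lemma localizes at $\p$. Using the cyclic symmetry of \eqref{bj_decomposition} I would assume $e_1 = 0$; the equation for $\idb_1$ then forces $v_1 = v_2 = v_{12} = v_{21} = v_{32} = 0$. A systematic pass through \eqref{coprim_j_1}--\eqref{coprim_jk_1} reveals that the only surviving nontrivial restrictions on the remaining four valuations are $v_3 v_{31} = 0$, $v_{13} v_{23} = 0$, and $v_{23} v_{31} = 0$, to be combined with the two equations $e_2 = v_{23} + 2v_3 + v_{13}$ and $e_3 = v_{31} + v_3 + v_{13}$. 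Splitting according to the relative sizes of $e_2$ and $e_3$ (the five cases $e_3 = 0$; $e_2 = 0 < e_3$; $0 < e_2 \leq e_3$; $e_3 < e_2 \leq 2e_3$; $e_2 > 2e_3$), the three coprimality products force exactly two of the four surviving variables to vanish, and the two linear equations then uniquely determine the remaining two in nonnegative integers. Patching these local data together via unique factorization produces the unique ideals $\ida_j, \ida_{jk}$ satisfying \eqref{bj_decomposition}.

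For the converse I would again reduce to $\idc = \O_K$ and verify $\min_j v_\p(\idb_j) = 0$ at every prime $\p$. Condition \eqref{coprim_j_1} forces at most one of $v_1, v_2, v_3$ to be positive. If, say, $v_2 > 0$, then \eqref{coprim_j_1}--\eqref{coprim_j_5} applied at $(j,k,l)=(1,2,3)$ kill $v_1, v_3, v_{21}, v_{13}, v_{31}, v_{23}$, so the expression for $\idb_3$ collapses to $0$. If instead all of $v_1, v_2, v_3$ vanish, then \eqref{coprim_kj_1}--\eqref{coprim_jk_1} leave only the three opposite pairs $\ida_{jk}, \ida_{kj}$ free to share common factors, and a brief inspection shows that any single positive $v_{xy}$ kills enough of the remaining variables to make one of $v_\p(\idb_1), v_\p(\idb_2), v_\p(\idb_3)$ vanish.

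The main (mild) obstacle is purely combinatorial: one must carefully verify in the case analysis that the three coprimality products genuinely pin down a unique solution in transitional cases such as $e_3 < e_2 \leq 2e_3$, where several a priori candidate configurations must be eliminated by hand. Beyond this bookkeeping, no ingredient deeper than unique factorization of ideals in $\O_K$ is required.
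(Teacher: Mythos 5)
Your argument is correct, but it follows a genuinely different route from the paper's. After the same reduction to $\idc = \O_K$, the paper argues globally: it exhibits the ideals in closed form,
\[\ida_{jk} = \frac{\idb_j}{(\idb_j, \idb_k^2)(\idb_l, \idb_j)}\text{,}\qquad \ida_k = \Bigl(\frac{\idb_j}{(\idb_j, \idb_k)}, \idb_k\Bigr)\text{,}\qquad \ida_{lk} = \frac{(\idb_j, \idb_k)}{\ida_k}\text{,}\]
checks \eqref{bj_decomposition} via $(\idb_j,\idb_k^2) = \ida_k^2\ida_{lk}$ and $(\idb_l,\idb_j) = \ida_j\ida_{kj}$, verifies each coprimality condition by showing the relevant gcd divides an ideal already known to be $\O_K$, and obtains uniqueness by showing that \eqref{coprim_j_1}--\eqref{coprim_jk_1} force $(\idb_j,\idb_k) = \ida_k\ida_{lk}$ and $(\idb_j/(\ida_k\ida_{lk}),\idb_k) = \ida_k$, i.e.\ that any admissible system must satisfy the displayed formulas. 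You instead localize at each prime $\p$ and solve for the valuations directly; I checked your bookkeeping and it is sound: with $e_1 = 0$ the surviving coprimality constraints on $(v_3, v_{23}, v_{31}, v_{13})$ form the path $3$--$31$--$23$--$13$ of forbidden pairs, so the support of a solution lies in one of the three maximal independent sets $\{3,23\}$, $\{3,13\}$, $\{13,31\}$, and these correspond exactly to your three norm regimes $e_2 \geq 2e_3$, $e_3 \leq e_2 \leq 2e_3$, $e_2 \leq e_3$ (with consistent overlaps at the boundaries), giving existence and uniqueness; your treatment of the converse is likewise correct. The trade-off is that the paper's proof is shorter and produces explicit formulas for the $\ida_v$ in terms of the $\idb_j$ (which also make the uniqueness argument one line), whereas your local analysis is more mechanical but makes the combinatorial structure of the coprimality graph, and hence the reason uniqueness holds, completely transparent. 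One cosmetic point: "exactly two of the four surviving variables vanish" should read "at least two", since in degenerate cases (e.g.\ $e_2 = e_3 = 0$) more of them vanish; this does not affect the argument.
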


\begin{proof}
It is enough to prove the lemma if $\idc = \O_K$, since we can always replace $\idb_j$ by $\idc^{-1}\idb_j$. In this case, we have $(\idb_j, \idb_k^2)(\idb_l, \idb_j) \mid \idb_j$. Let
\begin{equation}\label{def_decomposition_aj}
\ida_{jk} := \frac{\idb_j}{(\idb_j, \idb_k^2)(\idb_l, \idb_j)}\text{, }\ida_k := \left(\frac{\idb_j}{(\idb_j, \idb_k)}, \idb_k\right) \text{, and }\ida_{lk} := \frac{(\idb_j, \idb_k)}{\ida_k}\text.
\end{equation}
Then the $\ida_{jk}$, $\ida_{k}$, $\ida_{lk}$ are nonzero ideals of $\O_K$ and \eqref{bj_decomposition} holds, since $(\idb_j, \idb_k^2) = (\idb_j, \idb_k)\ida_k = \ida_k^2 \ida_{lk}$ and $(\idb_l, \idb_j) = \ida_j\ida_{kj}$. One readily verifies that the left-hand side in conditions \eqref{coprim_j_1} -- \eqref{coprim_j_4}, \eqref{coprim_kj_2}, \eqref{coprim_kj_3} divides $(\idb_1, \idb_2, \idb_3) = \O_K$. Similarly, the left-hand side in \eqref{coprim_j_5}, \eqref{coprim_jk_1} divides $(\idb_j/(\idb_j, \idb_k), \idb_k/(\idb_j, \idb_k)) = \O_K$, and the left-hand side in \eqref{coprim_kj_1} divides $(\idb_k/\ida_k, \idb_j/((\idb_j, \idb_k)\ida_k)) = \O_K$.

Now assume that \eqref{bj_decomposition} holds, with given nonzero ideals $\ida_k$, $\ida_{jk}$, $\ida_{lk}$ satisfying the coprimality conditions \eqref{coprim_j_1} -- \eqref{coprim_jk_1}. These conditions imply that $(\idb_j, \idb_k) = \ida_k\ida_{lk}$, and furthermore $(\idb_j/(\ida_k\ida_{lk}), \idb_k) = \ida_k$. Thus, the $\ida_k$,$\ida_{lk}$ are as in \eqref{def_decomposition_aj}. Clearly, this holds as well for the $\ida_{jk}$, and uniqueness is proved.

The last assertion is again a direct consequence of \eqref{coprim_j_1} -- \eqref{coprim_jk_1}.
\end{proof}
\begin{figure}[ht]\label{coprim_graph}
\[
\xymatrix@C-=20pt@R-=7pt{
1 \ar@{-}[r] & 21 \ar@{-}[rr] && 12 \ar@{-}[r] & 2 \ar@{-}[d]\\
31 \ar@{-}[u] & 13 \ar@{-}[l] & 3 \ar@{-}[l] & 23 \ar@{-}[l] & 32\ar@{-}[l]
}
\]
\caption{The graph $G = (V, E)$.}
\end{figure}
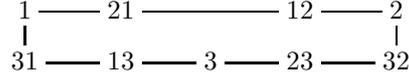
The coprimality conditions \eqref{coprim_j_1} -- \eqref{coprim_jk_1} can be expressed in a more convenient way: Let $G = (V, E)$ be the graph with vertex set $V := \{1, 2, 3, 12, 21, 23, 32, 31, 13\}$ and edge set
\[E := \{\{k, jk\}, \{k, lk\}, \{kl, lk\} \mid (j,k,l) \in A\}\text.\]
Then \eqref{coprim_j_1} -- \eqref{coprim_jk_1} hold if and only if $(\ida_v, \ida_w) = \O_K$ for all pairs $(v, w)$ of nonadjacent vertices of $V$. If we denote the edge set of the complement graph by $E'$, this means that
\begin{equation}\label{coprim_cond_graph}
\text{for any } \{v, w\}\in E'\text{, we have }(\ida_v, \ida_w) = \O_K\text. 
\end{equation}

For every point $(x_{23} : x_{31} : x_{12}) \in W(K)$, the ideal class $[\ideal(x_{23}, x_{31}, x_{12})]$ is well defined, and $[\ideal(x_{23}, x_{31}, x_{12})] = [C]$, for some $C \in \mathcal{C}$. By multiplying with a suitable element of $K^\times$, we can choose a representative ${\bf x} = (x_{23}, x_{31}, {x_{12}}) \in (\O_K\setminus\{0\})^3$ with $\ideal({\bf x}) = C$. This representative is unique up to scalar multiplication by units in $\O_K^\times$.

We apply Lemma \ref{lemma_ideal_decomposition} to the principal ideals $\idb_j := x_{jk}\O_K$ and obtain
\[x_{jk}\O_K = C\cdot \ida_{jk}\cdot \ida_{k}^2\cdot \ida_{lk}\cdot \ida_{j}\cdot \ida_{kj}\text,\]
with unique ideals $\ida_v$ of $\O_K$ satisfying \eqref{coprim_cond_graph}. For all $v \in V \setminus\{12, 23, 31\}$, there is a unique $C_v \in \mathcal{C}$ with $[\ida_v] = [C_v^{-1}]$. Choose $y_v \in K^\times$ with $y_v \O_K = \ida_v C_v$, and define $y_{12}$, $y_{23}$, $y_{31} \in K^\times$ by the equations
\begin{equation}\label{factor_xjk}
x_{jk} = y_{jk}\cdot y_{k}^2\cdot y_{lk}\cdot y_{j}\cdot y_{kj}\text.
\end{equation}
Then
\[y_{jk}\O_K = \ida_{jk} C_{jk}\text{, with }C_{jk} := CC_k^{-2}C_{lk}^{-1}C_j^{-1}C_{kj}^{-1}\text.\]

For ${\bf C} = (C, C_1, C_2, C_3, C_{21}, C_{32}, C_{13}) \in \mathcal{C}^7$, we define $M_{\bf C}$ as the set of all ${\bf y} = (y_v)_{v \in V} \in (K^\times)^9$ such that
\begin{equation}\label{def_MC}
y_v \in C_v \text{ for all }v \in V\text{, and the ideals }\ida_v := y_v C_v^{-1}\text{ satisfy \eqref{coprim_cond_graph}.}
\end{equation}
By what we have shown above, relations \eqref{factor_xjk} define a surjective mapping
\[\phi : \bigcup_{{\bf C} \in \mathcal{C}^7}M_{\bf C} \to W(K)\text.\]
If ${\bf y} \in M_{\bf C}$ and $\phi({\bf y}) = (x_{23} : x_{31} : x_{12})$ with $x_{jk}$ as in \eqref{factor_xjk} then 
\[x_{jk}\O_K = C\cdot \ida_{jk}\cdot\ida_k^2\cdot \ida_{lk}\cdot\ida_j\cdot\ida_{kj}\text.\]
By Lemma \ref{lemma_ideal_decomposition}, we have $\ideal(x_{23}, x_{31}, x_{12}) = C$, and the $\ida_v$ (and thus as well the $C_v$) are uniquely determined by the $x_{jk}\O_K$. In particular, the sets $M_{\bf C}$, ${\bf C} \in \mathcal{C}^7$, are pairwise disjoint. Moreover, $(x_{23}, x_{31}, x_{12})$ and the $y_v$, $v \in V$, are determined by $\phi({\bf y})$ up to multiplication by units. Therefore, $\phi({\bf y}) = \phi({\bf z})$ if and only if there are units $\zeta$, $\zeta_v \in \O_K^\times$ with
\[z_v = \zeta_v y_v \text{ for all }v \in V\text{ and }\zeta_{jk}\zeta_k^2\zeta_{lk}\zeta_j\zeta_{kj} = \zeta\text{ for all }(j, k, l) \in A\text.\]   
By eliminating the $\zeta_{jk}$, we see that $\phi({\bf y}) = \phi({\bf z})$ if and only if ${\bf y}$ and ${\bf z}$ are in the same orbit of the action $\odot$ of $(\O_K^\times)^7$ on $(K^\times)^9$ given by
\begin{equation}\label{action_unit_group}
(\zeta, \zeta_1, \zeta_2, \zeta_3, \zeta_{21}, \zeta_{32}, \zeta_{13}) \odot (y_v)_v := (z_v)_v\text,
\end{equation}
where $z_v := \zeta_v y_v$ for all $v \in V \setminus\{12, 23, 31\}$ and $z_{jk} := \zeta \zeta_k^{-2}\zeta_{lk}^{-1}\zeta_j^{-1}\zeta_{kj}^{-1} y_{jk}$. 

In what follows, it will be more convenient to work with the free Abelian subgroup $\mathcal{F}$ of $\O_K^\times$ generated by our fixed system of fundamental units. Clearly, $(\O_K^\times)^7$ is the direct product $(\O_K^\times)^7 = \mu_K^7 \cdot \mathcal{F}^7$. Since the action of $(\O_K^\times)^7$ on $(K^\times)^9$ is free, every orbit of $(K^\times)^9$ under the action of $(\O_K^\times)^7$ is the union of $|\mu_K^7| =\omega_K^7$ orbits under the action of $\mathcal{F}^7$.

Let $\mathcal{R}$ be a system of representatives for the orbits of $(K^\times)^9$ under the action of $\mathcal{F}^7$. Then $\phi$ induces an $\omega_K^7$-to-$1$ map
\[\phi : \bigcup_{{\bf C} \in \mathcal{C}^7}(M_{\bf C} \cap \mathcal{R}) \to W(K)\text.\]

The benefits of our construction become apparent in the height condition. With ${\bf x} = (x_{23}, x_{31}, x_{12})$ as in \eqref{factor_xjk}, we have 
$\psi_0({\bf x}) = y_1^2 y_2^2 y_3^2 y_{21} y_{32} y_{13}\cdot \psi({\bf y})$, where
\[\psi({\bf y}) = (\psi({\bf y})_0,\psi({\bf y})_1,\psi({\bf y})_2,\psi({\bf y})_3)\]
with
\[\psi({\bf y})_0 := \prod_{v \in V}y_v \text{ and }\psi({\bf y})_j := y_j^3 y_{jk}y_{jl}y_{kj}^2y_{lj}^2\text{, for }1 \leq j \leq 3\text.\]
Therefore, 
\[H(\psi_0({\bf x})) = H(\psi({\bf y})) = \Norm \ideal(\psi({\bf y}))^{-1}\prod_{i=1}^{r+s} \max_{0\leq j\leq 3}\{|\psi({\bf y})_j^{(i)}|\}^{d_i}\text.\]
A straightforward computation using $y_v = \ida_v C_v$ and \eqref{coprim_cond_graph} shows that 
\[\ideal(\psi({\bf y})) = C^3C_1^{-2}C_2^{-2}C_3^{-2}C_{21}^{-1}C_{32}^{-1}C_{13}^{-1}\text.\]
By our construction, $\psi({\bf y})$ satisfies the equation $\psi({\bf y})_0^3 = \psi({\bf y})_1\psi({\bf y})_2\psi({\bf y})_3$. Since this holds as well for all conjugates, the maximum is always one of $|\psi({\bf y})_1^{(i)}|$, $|\psi({\bf y})_2^{(i)}|$, $|\psi({\bf y})_3^{(i)}|$. We define 
\begin{equation}\label{def_MRB}\mathcal{R}(B) := \{{\bf y} \in \mathcal{R} \mid \prod_{i=1}^{r+s} \max_{1 \leq j \leq 3}\{|\sigma_i(y_j^3 y_{jk}y_{jl}y_{kj}^2y_{lj}^2)|\}^{d_i} \leq B\}\text.
\end{equation}
The results of this section can be summarized as follows.
\begin{proposition}\label{count_torsor}
Let $M_{\bf C}$ be as in \eqref{def_MC}, let $\mathcal{R}$ be any system of representatives for the orbits of $(K^\times)^9$ under the action $\odot$ of $\mathcal{F}^7$ given by \eqref{action_unit_group}, and let $\mathcal{R}(B)$ be as in \eqref{def_MRB}. Then $M_{\bf C} \cap \mathcal{R}(B)$ is finite for all $B>0$, ${\bf C} \in \mathcal{C}^7$, and
\[N(B) = \frac{1}{\omega_K^7}\sum_{\substack{{\bf C} \in \mathcal{C}^7}} |M_{\bf C} \cap \mathcal{R}(u_{\bf C}B)|\text,\]
where
\[u_{\bf C} := \Norm(C^3 C_1^{-2} C_2^{-2} C_3^{-2} C_{21}^{-1} C_{32}^{-1} C_{13}^{-1})\text.\]
\end{proposition}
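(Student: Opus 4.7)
The plan is to assemble the ingredients already assembled in the preceding discussion. By \eqref{NBeqVKB}, $N(B)$ equals the number of ${\bf x} \in W(K)$ with $H(\Psi_0({\bf x})) \leq B$. The construction above produced a surjection
\[\phi : \bigcup_{{\bf C} \in \mathcal{C}^7}(M_{\bf C} \cap \mathcal{R}) \to W(K)\]
that is $\omega_K^7$-to-$1$, together with the pairwise disjointness of the sets $M_{\bf C}$. So the only remaining task is to translate the height condition $H(\Psi_0({\bf x})) \leq B$ on the downstairs into a condition on the preimage side.

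The main computational step I would carry out explicitly is this translation: for ${\bf y} \in M_{\bf C}$ mapping to ${\bf x} = \phi({\bf y})$, the relation $\Psi_0({\bf x}) = y_1^2 y_2^2 y_3^2 y_{21} y_{32} y_{13} \cdot \psi({\bf y})$ in $\P^3(K)$ yields $H(\Psi_0({\bf x})) = H(\psi({\bf y}))$. Combining the product formula for the Weil height with the already established identity $\ideal(\psi({\bf y})) = C^3 C_1^{-2} C_2^{-2} C_3^{-2} C_{21}^{-1} C_{32}^{-1} C_{13}^{-1}$ (of absolute norm $u_{\bf C}$), and using that the cubic equation $\psi({\bf y})_0^3 = \psi({\bf y})_1 \psi({\bf y})_2 \psi({\bf y})_3$ (together with all its conjugates) forces the local max at each infinite place to be attained by some $j \in \{1,2,3\}$, the inequality $H(\Psi_0({\bf x})) \leq B$ rearranges to exactly ${\bf y} \in \mathcal{R}(u_{\bf C} B)$.

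For finiteness of $M_{\bf C} \cap \mathcal{R}(B)$, every ${\bf y}$ in this set is a $\phi$-preimage of some ${\bf x} \in W(K) \subseteq \P^2(K)$ with $H(\Psi_0({\bf x})) \leq u_{\bf C}^{-1} B$; Northcott's theorem gives finitely many such ${\bf x}$, and each has exactly $\omega_K^7$ preimages. The counting identity is then pure bookkeeping: for each ${\bf x} \in W(K)$ with $H(\Psi_0({\bf x})) \leq B$, the $\omega_K^7$ preimages under $\phi$ all lie in a single $M_{\bf C} \cap \mathcal{R}(u_{\bf C} B)$ (by disjointness of the $M_{\bf C}$), so summing over ${\bf C}$ and dividing by $\omega_K^7$ recovers $N(B)$. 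There is no genuine obstacle beyond the height translation in the middle paragraph; all heavy lifting (the ideal decomposition, the orbit analysis, the factorisation of $\Psi_0({\bf x})$ through $\psi$) was already carried out above.
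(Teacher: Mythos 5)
Your proposal is correct and follows the same route as the paper, whose ``proof'' of Proposition \ref{count_torsor} is precisely the preceding construction: the $\omega_K^7$-to-$1$ map $\phi$ on $\bigcup_{\bf C}(M_{\bf C}\cap\mathcal{R})$, the disjointness of the $M_{\bf C}$, and the identity $H(\Psi_0({\bf x})) = \Norm\ideal(\psi({\bf y}))^{-1}\prod_i\max_{1\leq j\leq 3}|\psi({\bf y})_j^{(i)}|^{d_i}$ with $\Norm\ideal(\psi({\bf y})) = u_{\bf C}$, which converts $H(\Psi_0({\bf x}))\leq B$ into ${\bf y}\in\mathcal{R}(u_{\bf C}B)$. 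Your Northcott argument for finiteness is a clean way to supply the one claim the paper leaves implicit at this stage.
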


\subsection{A system of representatives for the orbits}\label{subsection_fundamental_domain}
We construct a system $\mathcal{R}$ of representatives for the orbits of $(K^\times)^9$ under the action $\odot$ of $\mathcal{F}^7$ given by \eqref{action_unit_group}. 

\begin{lemma}\label{fund_syst_equations}
Let $\alpha_1$, $\alpha_2$, $\alpha_3 \in \mathcal{F}$ and consider the system of equations
\begin{equation}
\label{fund_sys_eq}\zeta \zeta_k^{-2} \zeta_{j}^{-1} = \alpha_j\text{, for }(j, k) \in \{(1,2), (2,3), (3,1)\}\text,
\end{equation}
with variables $\zeta$, $\zeta_j \in \mathcal{F}$. 
\begin{enumerate}[(i)]
 \item If $\alpha_1\alpha_2\alpha_3$ is not a cube in $\mathcal{F}$ then this system has no solutions.
 \item If $\alpha_1\alpha_2\alpha_3 = \xi^3$ with $\xi \in \mathcal{F}$ then the solutions are given by
 
 \begin{minipage}{0.3\linewidth}
 \begin{align*}
  \zeta_1 &= \delta\vphantom{\xi \alpha_2^{-1}}\\
  \zeta_2 &= \delta \xi^{-1}\alpha_3
 \end{align*}
 \end{minipage}
 \hspace{0.1cm}
 \begin{minipage}{0.3\linewidth}
 \begin{align*}
  \zeta_3 &= \delta \xi \alpha_2^{-1}\\
  \zeta &= \delta^3 \xi \alpha_2^{-1}\alpha_3\text, 
 \end{align*}
 \end{minipage}\\
\\
for all $\delta \in \mathcal{F}$.
\end{enumerate}
\end{lemma}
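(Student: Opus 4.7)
My plan is to exploit the fact that $\mathcal{F}$ is a free abelian group, so cube roots are unique whenever they exist.

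For part (i), I would simply multiply the three equations in \eqref{fund_sys_eq} together. The exponents on $\zeta_1$, $\zeta_2$, $\zeta_3$ each collapse to $-3$, so any solution forces
\[\alpha_1\alpha_2\alpha_3 = \bigl(\zeta(\zeta_1\zeta_2\zeta_3)^{-1}\bigr)^3,\]
which is a cube in $\mathcal{F}$. The contrapositive is exactly (i).

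For part (ii), I would assume $\alpha_1\alpha_2\alpha_3 = \xi^3$. Since $\mathcal{F}$ is torsion-free, cube roots in $\mathcal{F}$ are unique, so the computation above pins down $\zeta = \xi\zeta_1\zeta_2\zeta_3$ for any solution. Substituting this back into \eqref{fund_sys_eq} eliminates $\zeta$ and reduces the system to
\[\xi\zeta_3 = \alpha_1\zeta_2,\quad \xi\zeta_1 = \alpha_2\zeta_3,\quad \xi\zeta_2 = \alpha_3\zeta_1.\]
These three relations multiply to $\xi^3 = \alpha_1\alpha_2\alpha_3$, which is the assumed identity, so only two of them are independent. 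Treating $\zeta_1 = \delta$ as a free parameter in $\mathcal{F}$, the third relation yields $\zeta_2 = \delta\xi^{-1}\alpha_3$, and then the first yields $\zeta_3 = \alpha_1\xi^{-1}\zeta_2 = \delta\alpha_1\alpha_3\xi^{-2}$; rewriting $\alpha_1\alpha_3 = \xi^3\alpha_2^{-1}$ gives $\zeta_3 = \delta\xi\alpha_2^{-1}$. Plugging these into $\zeta = \xi\zeta_1\zeta_2\zeta_3$ produces $\zeta = \delta^3\xi\alpha_2^{-1}\alpha_3$, matching the statement. I would finish by noting that every $\delta\in\mathcal{F}$ indeed gives a solution, which is a direct substitution.

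I do not anticipate any genuine obstacle: the only conceptual ingredient is the uniqueness of cube roots in the torsion-free group $\mathcal{F}$, and the remaining work is purely an exercise in keeping track of exponents to arrive at the symmetric final form.
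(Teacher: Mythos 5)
Your proof is correct and follows essentially the same route as the paper: both arguments rest on multiplying the equations to exhibit $\alpha_1\alpha_2\alpha_3$ as a cube and on the uniqueness of cube roots in the free abelian group $\mathcal{F}$, followed by exponent bookkeeping. The only cosmetic difference is that you eliminate $\zeta$ via $\zeta=\xi\zeta_1\zeta_2\zeta_3$ and solve the resulting triangular system, whereas the paper isolates each $\zeta_j$ through the ninth-power relation $\zeta^3\zeta_j^{-9}=\alpha_1\alpha_2\alpha_3\alpha_k^{-3}\alpha_l^3$; both are equally valid.
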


\begin{proof}
Equations \eqref{fund_sys_eq} imply that
\begin{equation}\label{fund_sys_eq_1}
\zeta^{3} \zeta_{j}^{-9} = \alpha_j \alpha_k^{-2} \alpha_l^4 = \alpha_1\alpha_2\alpha_3 \alpha_k^{-3} \alpha_l^3 \text,
\end{equation}
which proves \emph{(i)}.

Now assume that $\alpha_1\alpha_2\alpha_3 = \xi^3$ for some $\xi \in \mathcal{F}$. Then $\xi$ is unique since $\mathcal{F}$ is free Abelian. Direct computations verify that the values given in \emph{(ii)} are solutions.

Given any solution $(\zeta, \zeta_1, \zeta_2, \zeta_3)$ of \eqref{fund_sys_eq}, let $\delta := \zeta_1$. Then \eqref{fund_sys_eq_1} with $j=1$ shows that $\zeta$ has the desired form. Similar computations using \eqref{fund_sys_eq_1} with $j = 2$ and $j = 3$ prove that $\zeta_2$ and $\zeta_3$ are as desired.
\end{proof}

Let $H$ be the subgroup of $(K^\times)^6$ of all $\underline{\alpha} = (\alpha_{12},\alpha_{21},\alpha_{23},\alpha_{32},\alpha_{31},\alpha_{13}) \in \mathcal{F}^6$ for which $\alpha_{12}\alpha_{21}^2\alpha_{23}\alpha_{32}^2\alpha_{31}\alpha_{13}^2$ is a cube in $\mathcal{F}$.

\begin{lemma}\label{lemma_fundamental_domain}
Let $\mathcal{R}_1 \subseteq (K^\times)^3$ be a system of representatives for the orbits of $(K^\times)^3$ under the action of $\mathcal{F}$ by scalar multiplication, and let $\mathcal{R}_2 \subseteq (K^\times)^6$ be a system of representatives for $(K^\times)^6/H$. Then $\mathcal{R} := \mathcal{R}_1 \times \mathcal{R}_2$ 
is a system of representatives for the orbits of $(K^\times)^9$ under the action $\odot$ of $\mathcal{F}^7$.
\end{lemma}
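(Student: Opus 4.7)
The plan is to split the $\mathcal{F}^7$-action on $(K^\times)^9$ into a part that acts on the six coordinates $(y_{12}, y_{21}, y_{23}, y_{32}, y_{31}, y_{13})$ through the subgroup $H$ and a complementary part that acts by diagonal scaling on the remaining three $(y_1, y_2, y_3)$. Throughout I identify $(K^\times)^3 \times (K^\times)^6$ with $(K^\times)^9$ by putting $(y_1, y_2, y_3)$ in the first factor and $(y_{12}, y_{21}, y_{23}, y_{32}, y_{31}, y_{13})$ in the second, matching the ordering used in defining $H$.

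First I would introduce the group homomorphism
\[\pi : \mathcal{F}^7 \to \mathcal{F}^6, \quad (\zeta, \zeta_1, \zeta_2, \zeta_3, \zeta_{21}, \zeta_{32}, \zeta_{13}) \mapsto (\alpha_{12}, \alpha_{21}, \alpha_{23}, \alpha_{32}, \alpha_{31}, \alpha_{13}),\]
recording the scaling factors $\alpha_v$ that $\odot$ applies to the six coordinates. A direct computation gives $\alpha_{12}\alpha_{21}^2\alpha_{23}\alpha_{32}^2\alpha_{31}\alpha_{13}^2 = (\zeta \zeta_1^{-1}\zeta_2^{-1}\zeta_3^{-1})^3$, showing $\operatorname{im}(\pi) \subseteq H$. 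For the reverse inclusion, given $(\alpha_v)_v \in H$, I would set $\zeta_{21} := \alpha_{21}$, $\zeta_{32} := \alpha_{32}$, $\zeta_{13} := \alpha_{13}$ and then reduce to solving a $3\times 4$ system in $(\zeta, \zeta_1, \zeta_2, \zeta_3)$ of the shape treated in Lemma \ref{fund_syst_equations}; its solvability criterion is exactly the cube condition defining $H$.

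Next I would determine $\ker(\pi)$ by applying Lemma \ref{fund_syst_equations}(ii) with $\alpha_1 = \alpha_2 = \alpha_3 = 1$, obtaining
\[\ker(\pi) = \{(\delta^3, \delta, \delta, \delta, 1, 1, 1) : \delta \in \mathcal{F}\} \cong \mathcal{F}.\]
A short check (using $\delta^3 \cdot \delta^{-2} \cdot \delta^{-1} = 1$) confirms that $\ker(\pi)$ acts trivially on all six coordinates of the second factor, while it scales each of $y_1$, $y_2$, $y_3$ by the same $\delta$.

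With these two facts in hand, the lemma falls out. Given $(y_v)_v \in (K^\times)^9$, the surjection $\mathcal{F}^7 \to H$ lets us move the six-tuple to the unique $\mathcal{R}_2$-representative of its $H$-coset; this determines the acting $\mathcal{F}^7$-element up to $\ker(\pi)$, and the residual $\ker(\pi) \cong \mathcal{F}$ then acts on $(y_1, y_2, y_3)$ purely by diagonal scalars, so a unique further action brings the triple into $\mathcal{R}_1$. Uniqueness is immediate: two points of $\mathcal{R}_1 \times \mathcal{R}_2$ in the same $\mathcal{F}^7$-orbit must have the same six-tuple (both are $\mathcal{R}_2$-representatives of the same $H$-coset), so they differ by a $\ker(\pi)$-action, which is a scalar on the $(y_1, y_2, y_3)$-part, forcing equality by the definition of $\mathcal{R}_1$. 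The main obstacle is the surjectivity $\operatorname{im}(\pi) = H$: one has to organize the seven parameters so that Lemma \ref{fund_syst_equations} applies cleanly and keep track of which sign conventions enter where. Once this is settled the kernel computation and the final assembly are routine.
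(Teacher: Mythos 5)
Your proposal is correct and is essentially the paper's argument recast in the language of the homomorphism $\pi:\mathcal{F}^7\to\mathcal{F}^6$: your identity $\alpha_{12}\alpha_{21}^2\alpha_{23}\alpha_{32}^2\alpha_{31}\alpha_{13}^2=(\zeta\zeta_1^{-1}\zeta_2^{-1}\zeta_3^{-1})^3$, the surjectivity onto $H$, and the kernel $\{(\delta^3,\delta,\delta,\delta,1,1,1)\}$ are exactly the content of reducing \eqref{fund_sys_eq_2} to \eqref{fund_sys_eq} and invoking Lemma \ref{fund_syst_equations}\emph{(ii)}, which is how the paper proceeds. The final assembly (unique $H$-coset representative, then unique $\delta$ bringing $(y_1,y_2,y_3)$ into $\mathcal{R}_1$) matches the paper step for step.
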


\begin{proof}
Let ${\bf y} = (y_v)_{v \in V} \in (K^\times)^9$. Then there is a unique $\underline{\alpha} \in H$ such that 
\[(\alpha_{12} y_{12}, \alpha_{21} y_{21}, \alpha_{23} y_{23}, \alpha_{32} y_{32}, \alpha_{31} y_{31}, \alpha_{13} y_{13}) \in \mathcal{R}_2\text.\]
The elements $\underline{\zeta} = (\zeta, \zeta_1, \zeta_2, \zeta_3, \zeta_{21}, \zeta_{32}, \zeta_{13}) \in \mathcal{F}^7$ with 
\[\underline{\zeta} \odot {\bf y} \in (K^\times)^3 \times \mathcal{R}_2\]
are those satisfying
\begin{equation}\label{fund_sys_eq_2}
\zeta_{kj} = \alpha_{kj} \text{ and }\zeta\zeta_k^{-2}\zeta_{lk}^{-1}\zeta_j^{-1}\zeta_{kj}^{-1} = \alpha_{jk}\text.
\end{equation}
With $\alpha_j := \alpha_{jk}\alpha_{kj}\alpha_{lk}$, this simplifies to \eqref{fund_sys_eq}. Now 
\[\alpha_1\alpha_2\alpha_3 = \alpha_{12}\alpha_{21}^2\alpha_{23}\alpha_{32}^2\alpha_{31}\alpha_{13}^2\]
is a cube in $\mathcal{F}$, so $\zeta$, $\zeta_1$, $\zeta_2$, $\zeta_3$ are of the form given in Lemma \ref{fund_syst_equations}, \emph{(ii)}, for $\delta \in \mathcal{F}$. There is exactly one $\delta \in \mathcal{F}$ such that the corresponding $\zeta_1$, $\zeta_2$, $\zeta_3$ satisfy $(\zeta_1 y_1, \zeta_2 y_2, \zeta_3 y_3) \in \mathcal{R}_1$. Hence, there is exactly one $\underline{\zeta} \in \mathcal{F}^7$ with $\underline{\zeta} \odot {\bf y} \in \mathcal{R}$.
\end{proof}

\begin{lemma}\label{lemma_R}
Let $R \subseteq K^\times$ be a system of representatives for $K^\times / \mathcal{F}$, and let $R_\mathcal{F} \subseteq \mathcal{F}$ be a system of representatives for $\mathcal{F}/\{\xi^3 \mid \xi \in\mathcal{F}\}$. Then
\[\mathcal{R}_2 := \bigcup_{\rho \in R_\mathcal{F}} (\rho R \times R \times R \times R \times R \times R)\]
is a system of representatives for $(K^\times)^6/H$.
\end{lemma}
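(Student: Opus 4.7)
My plan is to verify directly that for every $\mathbf{y} = (y_{12}, y_{21}, y_{23}, y_{32}, y_{31}, y_{13}) \in (K^\times)^6$ there is exactly one $\underline{\alpha} \in H$ such that $\underline{\alpha}\cdot \mathbf{y} := (\alpha_{12} y_{12}, \alpha_{21} y_{21}, \alpha_{23} y_{23}, \alpha_{32} y_{32}, \alpha_{31} y_{31}, \alpha_{13} y_{13})$ lies in $\mathcal{R}_2$; this is equivalent to the coset $\mathbf{y} H$ containing exactly one element of $\mathcal{R}_2$.

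Since $R$ is a system of representatives for $K^\times/\mathcal{F}$, for each $v \in \{21, 23, 32, 31, 13\}$ there is a unique $\alpha_v \in \mathcal{F}$ with $\alpha_v y_v \in R$; these five components are thus forced. For the first component, I would observe that membership of $\alpha_{12} y_{12}$ in $\rho R$ for some $\rho \in R_\mathcal{F}$ is equivalent to $\rho^{-1}\alpha_{12} = \beta$, where $\beta \in \mathcal{F}$ is the unique element with $\beta y_{12} \in R$. Hence $\alpha_{12}$ must have the form $\rho\beta$ for some $\rho \in R_\mathcal{F}$, and the choice of $\rho$ determines $\alpha_{12}$ uniquely.

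It remains to pin down $\rho$ using the cubic condition defining $H$. Setting $\delta := \alpha_{21}^2 \alpha_{23} \alpha_{32}^2 \alpha_{31} \alpha_{13}^2 \in \mathcal{F}$ (which is now fixed), the requirement $\alpha_{12}\alpha_{21}^2\alpha_{23}\alpha_{32}^2\alpha_{31}\alpha_{13}^2 \in \{\xi^3 \mid \xi \in \mathcal{F}\}$ becomes $\rho\beta\delta \in \mathcal{F}^3$, i.e., $\rho \in (\beta\delta)^{-1}\mathcal{F}^3$. Because $R_\mathcal{F}$ contains exactly one element of each coset of $\mathcal{F}/\{\xi^3 \mid \xi \in \mathcal{F}\}$, precisely one $\rho \in R_\mathcal{F}$ satisfies this, yielding a unique $\alpha_{12}$ and hence a unique $\underline{\alpha} \in H$. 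No step is really an obstacle; the only place requiring a touch of care is the bookkeeping that decouples the first coordinate, where the freedom in choosing $\rho$ exactly absorbs the cubic congruence imposed by $H$.
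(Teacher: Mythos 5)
Your proof is correct and follows essentially the same route as the paper's: fix the five components $\alpha_v$ for $v\neq 12$ via the uniqueness of representatives in $R$, then use the cube condition defining $H$ together with the fact that $R_\mathcal{F}$ represents $\mathcal{F}/\{\xi^3 \mid \xi\in\mathcal{F}\}$ to pin down the first component uniquely. The only cosmetic difference is that the paper phrases the last step as choosing the unique cube $\xi^3$ making $y_{12}\delta^{-1}\xi^3$ land in $\bigcup_\rho \rho R$, whereas you write $\alpha_{12}=\rho\beta$ and solve for $\rho$; these are the same computation.
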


\begin{proof}
Clearly, $\bigcup_{\rho \in \mathcal{R}_\mathcal{F}}\rho R$ is a system of representatives for $K^\times/\{\xi^3 \mid \xi \in \mathcal{F}\}$. Let ${\bf y} \in (K^\times)^6$. For all $v \in \{21, 23, 32, 31, 13\}$, there is exactly one $\alpha_v \in \mathcal{F}$ with $\alpha_v y_v \in R$. Moreover, there is exactly one $\xi \in \mathcal{F}$ such that $y_{12}(\alpha_{21}^2\alpha_{23}\alpha_{32}^2\alpha_{31}\alpha_{13}^2)^{-1}\xi^3 \in \bigcup_{\rho \in \mathcal{R}_\mathcal{F}}\rho R$. Hence, there is exactly one $\alpha_{12} := (\alpha_{21}^2\alpha_{23}\alpha_{32}^2\alpha_{31}\alpha_{13}^2)^{-1}\xi^3 \in \mathcal{F}$, such that $\underline{\alpha} = (\alpha_{12}, \alpha_{21}, \alpha_{23}, \alpha_{32}, \alpha_{31}, \alpha_{13}) \in H$ and $\underline{\alpha}{\bf y} \in \mathcal{R}_2$. 
\end{proof}

We choose the system $\mathcal{R} = \mathcal{R}_1 \times \mathcal{R}_2$ as in Lemma \ref{lemma_fundamental_domain}, where $\mathcal{R}_1$ is any system of representatives for the diagonal action of $\mathcal{F}$ on $(K^\times)^3$, and $\mathcal{R}_2$ is as in Lemma \ref{lemma_R}. 

\section{Proof of Theorem 1}
This section is a generalization of \cite[Section 5]{Derenthal2011}. We reduce Theorem \ref{main_theorem} to a central lemma (Lemma \ref{main_lemma}), whose proof will take up the rest of the article. We assume that $K$ is of degree $d \geq 2$. Over $\Q$, one would need to replace Lemma \ref{lemma_y1y2y3} by a slightly more intricate argument to make the sum over the error terms converge, for which we refer to \cite{Heath-Brown1999}.   

\subsection{M\"obius inversions}
Let ${\bf C} = (C, C_1, C_2, C_3, C_{21}, C_{32}, C_{13}) \in \mathcal{C}^7$ be fixed. We investigate the quantity $|M_{\bf C} \cap \mathcal{R}(u_{\bf C}B)|$ from Proposition \ref{count_torsor}. We can write
\[|M_{\bf C} \cap \mathcal{R}(u_{\bf C}B)| = \sum_{\substack{{\bf y} \in \mathcal{R}(u_{\bf C}B)\\\eqref{def_MC}\text{ holds}}}1\text.\]
M\"obius inversion for all the coprimality conditions in \eqref{coprim_cond_graph} yields
\begin{equation}\label{moebius_1}
|M_{\bf C} \cap \mathcal{R}(u_{\bf C}B)| = \sum_{\substack{(\mathfrak{d}_e)_{e \in E'}\\\{0\} \neq \mathfrak{d}_e\unlhd \O_K}}\left(\prod_{e \in E'}\mu(\mathfrak{d}_e)\right)\sum_{\substack{{\bf y} \in \mathcal{R}(u_{\bf C}B)\\\forall e = \{v, w\} \in E':\ y_v \in \mathfrak{d}_eC_v\text{, }y_w \in \mathfrak{d}_eC_w}}1\text,\end{equation}
where each $\mathfrak{d}_e$ runs over all nonzero ideals of $\O_K$ and $\mu$ is the M\"obius function for nonzero ideals of $\O_K$. Lemma \ref{main_lemma} will imply that the last sum is always finite and different from zero for at most finitely many $(\mathfrak{d}_e)_{e \in E'}$. With $\ida_v := \bigcap_{v \in e \in E'}\mathfrak{d}_eC_v$, we obtain
\begin{equation}\label{moebius}
\sum_{\substack{{\bf y} \in \mathcal{R}(u_{\bf C}B)\\\forall e = \{v, w\} \in E':\ y_v \in \mathfrak{d}_eC_v\text{, }y_w \in \mathfrak{d}_eC_w}}1 = \sum_{\substack{{\bf y} \in \mathcal{R}(u_{\bf C}B)\\\forall v:\ y_v \in \ida_v}}1\text.\end{equation}
We estimate this sum by the following lemma. Its proof is central to this article and will be given in Section \ref{sec_proof_lemma}.
\begin{lemma}\label{main_lemma}
For every $v \in V$, let $\ida_v$ be a fractional ideal of $K$ with $\Norm\ida_v \geq c$, for some constant $c > 0$ depending only on $K$. With $\mathcal{R}(B)$ as in \eqref{def_MRB}, we have
\begin{align*}\sum_{\substack{{\bf y} \in \mathcal{R}(B)\\\forall v:\ y_v \in \ida_v}}1 &= \frac{9^q}{4\cdot 6!} \left(\frac{2^r(2\pi)^s}{\sqrt{|\Delta_K|}}\right)^9\frac{R_K^7}{\prod_{v\in V}\Norm\ida_v} B(\log B)^6\\ &+ O\left(\frac{\max_j\{\Norm \ida_j\}^{1/d}}{\prod_j\Norm\ida_j \prod_{i\neq j}\Norm\ida_{ij}^{1-2/(3d)}} B(\log B)^5\right)\text,\end{align*}
for $B \geq e$. The implicit $O$-constant depends on $K$.
\end{lemma}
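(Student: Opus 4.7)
The plan is to perform the nested sum in two stages: first I fix the six ``outer'' coordinates $(y_v)_{v \in V \setminus \{1,2,3\}}$ and carry out the inner sum over $(y_1, y_2, y_3) \in \mathcal{R}_1$ by a Schanuel-type lattice-point count; then I sum the resulting asymptotic over the six outer variables to obtain the logarithmic factors.

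For the inner step, set $\eta_j := y_{jk} y_{jl} y_{kj}^2 y_{lj}^2$ so that the height constraint in \eqref{def_MRB} reads $\prod_{i=1}^{r+s} \max_j |y_j^{(i)}|^{3 d_i} |\eta_j^{(i)}|^{d_i} \leq B$. After the coordinatewise change of variable $u_j := y_j\cdot \eta_j^{1/3}$ in Minkowski space $K_\infty^3 := (K \otimes_\Q \R)^3$, this becomes the standard projective-height region for $(u_1 : u_2 : u_3)$ of Weil height at most $B^{1/3}$. I would then proceed as in Schanuel's argument \cite{Schanuel1979}: embed the twisted lattice $\eta_1^{1/3}\ida_1 \times \eta_2^{1/3}\ida_2 \times \eta_3^{1/3}\ida_3$ into $\R^{3d}$, pass to a fundamental domain for the diagonal $\mathcal{F}$-action via the logarithmic embedding, and apply a Lipschitz-parametrization lattice-point counting lemma (Davenport's lemma or a number-field refinement) to get
\[\sum_{\substack{(y_1, y_2, y_3) \in \mathcal{R}_1 \\ y_j \in \ida_j,\ \text{height} \leq B}}1 = \frac{c^{\mathrm{in}} R_K\cdot B}{\prod_{j=1}^{3}\Norm\ida_j \cdot \prod_{j=1}^{3}\prod_{i=1}^{r+s}|\eta_j^{(i)}|^{d_i/3}} + O\bigl(\mathrm{error}_{\mathrm{in}}\bigr),\]
where $c^{\mathrm{in}}$ collects Archimedean and discriminant factors and $\mathrm{error}_{\mathrm{in}}$ is roughly $B^{1-1/(3d)}$ times inverse powers of the lattice covolumes.

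A direct computation from the definition of the $\eta_j$'s gives $\prod_j \prod_i |\eta_j^{(i)}|^{d_i/3} = \prod_{v \in V \setminus \{1,2,3\}}\prod_i |y_v^{(i)}|^{d_i}$, so the inner main term decouples across the six outer coordinates. Substituting and summing over $\rho \in R_\mathcal{F}$ together with $y_v \in \ida_v$ in the $\mathcal{F}$-representative system of Lemma \ref{lemma_R}, each outer sum of the form $\sum_{y_v} 1/\prod_i|y_v^{(i)}|^{d_i}$ is handled by a Schanuel-style count for $\P^1(K)$-type sums combined with partial summation, contributing a factor of order $\tfrac{2^r(2\pi)^s R_K}{\sqrt{|\Delta_K|}\,\Norm\ida_v}$ together with one $\log$. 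The six logarithms are coupled through the effective constraints $\prod_i|\eta_j^{(i)}|^{d_i} \leq B$ for $j=1,2,3$, which after taking logarithms carve out a six-dimensional simplex of volume $(\log B)^6/6!$. Collecting constants then yields the stated main term: nine copies of $2^r(2\pi)^s/\sqrt{|\Delta_K|}$ from the nine lattice densities, seven copies of $R_K$ from the seven independent orbit quotients, $1/\prod_v \Norm\ida_v$ from the lattices, $1/6!$ from the simplex, and the remaining numerical factor $9^q/4$ from the normalizations of the logarithmic fundamental domain and of the representative system $R_\mathcal{F}$ for cubes modulo $\mathcal{F}$.

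The hardest part will be the error analysis. Davenport's lemma gives an inner error of size roughly $B^{1-1/(3d)}$ per fixed outer tuple with an explicit dependence on the covolumes $\Norm\ida_j\cdot \prod_i|\eta_j^{(i)}|^{d_i/3}$; this is where the awkward exponent $1-2/(3d)$ in the outer ideals will be forced. Summing this error over the six outer variables under the cubic constraint loses one log factor while gaining a $B^{1/(3d)}$, producing the claimed bound. The delicate points will be (i) constructing a fundamental domain for $\mathcal{F}$ whose logarithmic image has Lipschitz boundary uniformly in the outer data so that the lattice-counting lemma applies uniformly, (ii) handling outer tuples for which the inner region is small or very skewed so that the partial summation still converges, and (iii) bookkeeping the Archimedean constants precisely enough to extract the leading factor $9^q/(4\cdot 6!)$.
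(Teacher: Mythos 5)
Your proposal follows essentially the same route as the paper: an inner Schanuel-type lattice count over $(y_1,y_2,y_3)$ with lattices twisted by $\xi_j^{1/3}$ (the paper's Lemma \ref{lemma_y1y2y3}, built on the Masser--Vaaler sets and Widmer's counting theorem), a decoupled outer sum of $1/\prod_v|N(y_v)|$ compared against a six-dimensional integral, and an error analysis via the divisor function and partial summation that trades one logarithm for the factor $B^{1/(3d)}$ and forces the exponents $1-2/(3d)$. The only slip is in the constant bookkeeping: the region cut out in logarithmic coordinates by the three coupled constraints $t_{jk}t_{jl}t_{kj}^2t_{lj}^2\leq B$ is a polytope of volume $(\log B)^6/(4\cdot 6!)$, not a simplex of volume $(\log B)^6/6!$, so the factor $1/4$ comes from this volume (as computed at the end of \cite{Heath-Brown1999}) rather than from the normalizations of the fundamental domain and of $R_{\mathcal{F}}$, which contribute exactly $3^q\cdot 3^q=9^q$.
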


\noindent For any $(\mathfrak{d}_e)_{e \in E'}$ and $v \in V$, we define $r_v := \Norm(\cap_{v \in e \in E'}\mathfrak{d}_e)$,
\begin{equation}\label{R1_R2}R_1 := \prod_{v \in V}r_v\text{,}\quad\text{ and }\quad R_2 := \max_j\{r_j\}^{-1/d}\prod_{j}r_j\prod_{i \neq j}r_{ij}^{1-2/(3d)}\text.\end{equation}
We notice that $\Norm\ida_v = \Norm(\cap_{v \in e \in E'}\mathfrak{d}_eC_v) = \Norm(C_v)r_v$. Recall that we defined $C_{jk} := CC_k^{-2}C_{lk}^{-1}C_j^{-1}C_{kj}^{-1}$, for $jk \in \{12,23,31\}$, so
\[\prod_{v \in V}\Norm C_v = \Norm(C^3 C_1^{-2} C_2^{-2} C_3^{-2} C_{21}^{-1} C_{32}^{-1} C_{13}^{-1}) = u_{\bf C}\text.\]
Since the $C$, $C_j$, $C_{kj}$ are members of the fixed finite set $\mathcal{C}$, their absolute norms are bounded from below and above by positive constants depending only on $K$. With this and Lemma \ref{main_lemma}, we obtain
\[\sum_{\substack{{\bf y} \in \mathcal{R}(u_{\bf C}B)\\y_v \in \ida_v}}1 = \frac{9^q}{4\cdot 6!} \left(\frac{2^r(2\pi)^s}{\sqrt{|\Delta_K|}}\right)^9 R_K^7\frac{B}{R_1}(\log B)^6 + O\left(\frac{B}{R_2}(\log B)^5\right)\text,\]
whenever $B \geq e/u_{\bf C}$. Otherwise, the error term dominates the main term. Let
\begin{equation}\label{rho_omega}
\omega := \sum_{\substack{(\mathfrak{d}_e)_{e \in E'}\\\{0\} \neq \mathfrak{d_e}\unlhd \O_K}}\prod_{e \in E'}\mu(\mathfrak{d}_e)R_1^{-1}\text{, }\quad \rho := \sum_{\substack{(\mathfrak{d}_e)_{e \in E'}\\\{0\} \neq \mathfrak{d_e}\unlhd \O_K}}\prod_{e \in E'}|\mu(\mathfrak{d}_e)|R_2^{-1}\text.
\end{equation}
We will see in Lemma \ref{rho_omega_converge} that these sums converge under our assumption that $d \geq 2$. Since the sum defining $\rho$ converges, \eqref{moebius_1} and \eqref{moebius} yield
\begin{align*}
|M_{\bf C} \cap \mathcal{R}(u_{\bf C}B)| &= \frac{9^q}{4\cdot 6!} \left(\frac{2^r(2\pi)^s}{\sqrt{|\Delta_K|}}\right)^9 R_K^7 \omega B(\log B)^6\\ &+ O(B(\log B)^5)\text.
\end{align*}

\subsection{Computation of the constant}

We notice that the above expression for $|M_{\bf C} \cap \mathcal{R}(u_{\bf C}B)|$ does not depend on ${\bf C} \in \mathcal{C}^7$. Therefore, Proposition \ref{count_torsor} implies
\[N(B) = \frac{9^q}{4\cdot 6!} \left(\frac{2^r(2\pi)^s}{\sqrt{|\Delta_K|}}\right)^9 \left(\frac{h_K R_K}{\omega_K}\right)^7 \omega B(\log B)^6 + O(B(\log B)^5)\text.\]
Theorem \ref{main_theorem} is an immediate consequence of the following lemma.
\begin{lemma}\label{rho_omega_converge}
Let $\omega$, $\rho$ be as in \eqref{rho_omega}, with $R_1$, $R_2$ as in \eqref{R1_R2}. If $d \geq 2$ then both sums converge, and 
\begin{equation}\label{euler_product_omega}\omega = \prod_{\p}\left(1 - \frac{1}{\Norm\p}\right)^7\left(1+\frac{7}{\Norm\p} + \frac{1}{\Norm\p^2}\right)\text,\end{equation}
where the product runs over all nonzero prime ideals $\mathfrak{p}$ of $\O_K$.
\end{lemma}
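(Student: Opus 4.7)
The plan is to express both $\omega$ and $\rho$ as Euler products over the prime ideals $\p$ of $\O_K$. Since $\mu$ is supported on squarefree ideals and each $r_v = \Norm(\bigcap_{v \in e \in E'}\mathfrak{d}_e)$ factors as a product over primes (intersection of ideals commutes with localization), both $R_1 = \prod_v r_v$ and $R_2$ factor over primes as well. Hence formally $\omega = \prod_\p \omega_\p$ and $\rho = \prod_\p \rho_\p$, where the local sum ranges over tuples $(\mathfrak{d}_e)_{e \in E'}$ with $\mathfrak{d}_e \in \{\O_K,\p\}$. Setting $\epsilon_e := 1$ when $\mathfrak{d}_e = \p$ and $\epsilon_e := 0$ otherwise, and writing $N := \Norm \p$, one has $r_v = N^{\max_{v \in e} \epsilon_e}$ locally.

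I would handle $\rho$ first. The crucial point is that $r_v = N$ requires some $e = \{v,w\} \in E'$ with $\epsilon_e = 1$, which then also forces $r_w = N$; hence the dominant contribution to $\rho_\p - 1$ comes from setting $\epsilon_e = 1$ for exactly one edge. A brief case analysis based on whether the two endpoints of $e$ lie in $\{1,2,3\}$ or in $\{12,21,23,32,31,13\}$ shows that the corresponding value of $R_2^{-1}$ equals $N^{-(2-1/d)}$, $N^{-(2-5/(3d))}$, or $N^{-(2-4/(3d))}$, respectively. The worst exponent is $2 - 5/(3d)$, which exceeds $1$ precisely when $d \geq 2$ (giving at least $7/6$). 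Therefore $\rho_\p = 1 + O(N^{-7/6})$, and $\rho$ converges absolutely. Since $r_v \geq 1$ yields $R_2/R_1 = \max_j r_j^{-1/d}\prod_{i\neq j}r_{ij}^{-2/(3d)} \leq 1$, the series defining $\omega$ is dominated termwise in absolute value by $\rho$, so it converges absolutely and equals $\prod_\p \omega_\p$.

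To evaluate $\omega_\p = \sum_{S\subseteq E'}(-1)^{|S|}N^{-|V(S)|}$, where $V(S) := \bigcup_{e \in S}e$, I would expand $N^{-|V(S)|} = \prod_v f_v(S)$ with $f_v(S) = N^{-1}$ if $v \in V(S)$ and $1$ otherwise, using the identity $f_v(S) = N^{-1} + (1-N^{-1})\mathbf{1}_{v \notin V(S)}$. Distributing the product and swapping sums, with $W$ denoting the set of $v$ where the $(1-N^{-1})$-term is chosen, the condition $V(S) \cap W = \emptyset$ is equivalent to $S$ lying in the edge set of the subgraph of $(V,E')$ induced on $V\setminus W$. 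The inner sum $\sum_{S} (-1)^{|S|}$ over all such $S$ vanishes unless that induced edge set is empty, i.e., unless $T := V \setminus W$ is a clique in $G$, and is $1$ otherwise. This yields
\[\omega_\p = \sum_{\substack{T \subseteq V\\ T\text{ clique in }G}} N^{-|T|}(1-N^{-1})^{9-|T|}.\]
Reading off the edge list of $G$, one sees it is the $9$-cycle $1 - 21 - 12 - 2 - 32 - 23 - 3 - 13 - 31 - 1$, whose cliques are the empty set, the $9$ singletons, and the $9$ edges. Substituting and pulling out $(1-N^{-1})^7$ reduces $\omega_\p$ to $(1-N^{-1})^7\bigl((1-N^{-1})^2 + 9N^{-1}(1-N^{-1}) + 9N^{-2}\bigr) = (1-N^{-1})^7(1+7N^{-1}+N^{-2})$, and multiplying over $\p$ yields \eqref{euler_product_omega}.

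The main obstacle is the convergence analysis of $\rho$: it depends on the worst exponent $2-5/(3d)$ being strictly greater than $1$, which is exactly the condition $d \geq 2$. This pinpoints why the naive Euler-product argument breaks over $\Q$ and a more delicate treatment, as in \cite{Heath-Brown1999}, is required in that case.
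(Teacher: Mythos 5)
Your argument is correct and follows the same basic route as the paper: factor the sums into Euler products, show the local factor of $\rho$ is $1+O(\Norm\p^{-(2-5/(3d))})$ (the paper's exponent $(6d-5)/(3d)$ is the same number), deduce convergence of $\omega$ from the termwise bound $R_1^{-1}\leq R_2^{-1}$, and identify the local factor of $\omega$. The difference is that the paper outsources both the convergence estimate and the polynomial identity to Derenthal--Janda, whereas you give a self-contained derivation; in particular, your evaluation of $\omega_\p$ via the clique identity $\omega_\p=\sum_{T}N^{-|T|}(1-N^{-1})^{9-|T|}$ (sum over cliques $T$ of $G$), together with the observation that $G$ is a $9$-cycle, is a clean conceptual replacement for the brute-force expansion of the polynomial $A(x)$ in the cited source. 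Two small points should be tightened. First, $R_2$ does not literally factor over primes because of the factor $\max_j\{r_j\}^{-1/d}$; what is true, and all you need, is $\max_j r_j\leq\prod_\p\max_j r_{j,\p}$, hence $R_2\geq\prod_\p R_{2,\p}$ and $\rho\leq\prod_\p\rho_\p$, which suffices since all terms are nonnegative (for $\omega$ the factorization is exact, as $R_1$ is genuinely multiplicative). Second, the assertion that single edges dominate $\rho_\p-1$ needs the (easy) check that \emph{every} nonempty $S\subseteq E'$ contributes exponent at least $2-5/(3d)$: with $a=|V(S)\cap\{1,2,3\}|$ and $b=|V(S)\setminus\{1,2,3\}|$, the exponent is $a+b(1-\tfrac{2}{3d})-\tfrac{1}{d}\mathbf{1}_{a\geq 1}$ with $a+b\geq 2$, and the penalty $-1/d$ is incurred at most once and is absorbed by the $+1$ from one covered vertex $j$; minimizing over the cases $a=0$, $a=1$, $a\geq 2$ gives exactly $2-5/(3d)$. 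With these remarks added, the proof is complete.
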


\begin{proof}
The proof is a straightforward generalization of the one in \cite[Section 5]{Derenthal2011}. An obvious modification of the argument given there shows that the Euler factor of $\rho$ corresponding to a prime ideal $\mathfrak{p}$ of $\O_K$ is $1 + O(\Norm\p^{-{(6d-5)/(3d)}})$, so the sum defining $\rho$ is convergent whenever $d \geq 2$. Since $\omega \leq \rho$, the sum defining $\omega$ converges as well. 

Let $A(x)$ be the polynomial defined in \cite[Section 5]{Derenthal2011}, and $A_\p$ the Euler factor of $\omega$ corresponding to $\p$. Then we have $A_\p = A(\Norm\p^{-1})$, and \eqref{euler_product_omega} follows from the investigation of $A(x)$ in \cite[Section 5]{Derenthal2011}.
\end{proof}

This completes our proof of Theorem \ref{main_theorem}, up to proving Lemma \ref{main_lemma}. 

\section{Auxiliary results}
Let $n$, $M$ be positive integers and $L > 0$. By $\Lip(n, M, L)$, we denote the set of all subsets $\mathcal{B}$ of $\R^n$ for which there exist $M$ maps $\Phi : [0, 1]^{n-1} \to \R^n$ satisfying a Lipschitz condition
\[\left|\Phi(v) - \Phi(w)\right| \leq L\left|v - w\right|\text,\]
such that $\mathcal{B}$ is covered by the union of the images of the maps $\Phi$. Here, $|\cdot |$ is the usual Euclidean norm. (The subsets in $\Lip(1, M, L)$ are just those with at most $M$ elements.) We will use the following lemma to bound the error terms when estimating a sum by an integral. Part \emph{(i)} generalizes an argument used in \cite[Chapter VI, Theorem 2]{Lang1994}.

\begin{lemma}\label{translate_of_fixed_set}
 Let $D$, $\mathcal{B} \subseteq \R^n$ be bounded subsets with $\mathcal{B} \in \Lip(n, M, L)$.
\begin{enumerate}[(i)]
 \item Let $\Lambda \subseteq \R^n$ be a lattice. Then
\[|\{\lambda \in \Lambda \mid (\lambda + D) \cap \mathcal{B} \neq \emptyset\}| \ll_{\Lambda, D} M (L+1)^{n-1}\text.\]
\item If $D$, $\mathcal{B}$ are compact then $\{{\bf x}\in \R^n \mid (x + D) \cap \mathcal{B} \neq \emptyset\}$ is measurable and
\[\Vol \{{\bf x}\in \R^n \mid (x + D) \cap \mathcal{B} \neq \emptyset\} \ll_{D} M (L+1)^{n-1}\text.\]
\end{enumerate}
\end{lemma}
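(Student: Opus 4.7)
The plan is to rewrite the condition $(\lambda+D)\cap\mathcal{B}\neq\emptyset$ as $\lambda\in\mathcal{B}-D:=\{b-d\mid b\in\mathcal{B},\ d\in D\}$, reducing both statements to estimating, respectively, the number of lattice points in and the volume of this Minkowski difference. Since $D$ is bounded, fix $r>0$ with $D\subseteq B_r(0)$. To exploit the Lipschitz structure, I set $N:=\lceil L+1\rceil$ and subdivide each source cube $[0,1]^{n-1}$ into $N^{n-1}$ subcubes of side $1/N$. The restriction of each of the $M$ given Lipschitz maps $\Phi$ to such a subcube has image of diameter at most $L\sqrt{n-1}/N\leq\sqrt{n-1}$, so $\mathcal{B}$ is covered by a family $P_1,\ldots,P_T$ of at most $T\ll M(L+1)^{n-1}$ sets, each of diameter $O_n(1)$.

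For part (i), it then suffices to bound, for a single piece $P$ of bounded diameter, the number of $\lambda\in\Lambda$ with $(\lambda+D)\cap P\neq\emptyset$. Picking any $p\in P$, every such $\lambda$ lies within distance $r+\sqrt{n-1}$ of $p$, and the number of points of the fixed lattice $\Lambda$ in a ball of fixed radius is $O_{\Lambda,D}(1)$. Summing over the $T$ pieces yields the required bound $\ll_{\Lambda,D}M(L+1)^{n-1}$; this is the direct generalization of the argument from \cite[Chapter VI, Theorem 2]{Lang1994} cited in the excerpt.

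For part (ii), the compactness of $\mathcal{B}\times D$ together with continuity of subtraction shows that $\mathcal{B}-D$ is compact, hence Borel measurable. Taking the $P_i$ as closed images of closed subcubes, one has $\mathcal{B}-D\subseteq\bigcup_{i=1}^{T}(P_i-D)$, and each $P_i-D$ is contained in a ball of radius $r+\sqrt{n-1}$, so has volume $O_D(1)$; summing over the $T$ pieces gives the stated estimate. The argument is essentially routine and contains no serious obstacle; the one small subtlety worth flagging is the use of $\lceil L+1\rceil$ rather than $\lceil L\rceil$ in the subdivision, which keeps the bound finite and meaningful as $L\to 0$ and makes the two factors $M$ and $(L+1)^{n-1}$ contribute independently.
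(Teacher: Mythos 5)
Your proof is correct and follows essentially the same route as the paper's: reduce to the Minkowski difference $\mathcal{B}-D$, subdivide each source cube into $O((L+1)^{n-1})$ subcubes so that each image piece has bounded diameter, and bound the lattice-point count (resp.\ volume) of a fixed-radius ball per piece. The only cosmetic difference is your choice of $\lceil L+1\rceil$ versus the paper's $\lfloor L\rfloor+1$ for the subdivision parameter, which changes nothing.
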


\begin{proof}
For $x \in \R^n$, we have $(x + D) \cap \mathcal{B} \neq \emptyset$ if and only if $x \in \mathcal{B} - D$. If $\mathcal{B}$ and $D$ are compact, the set $\mathcal{B} - D$ is compact as well. This proves measurability of the set in \emph{(ii)}.

Let $\Phi : [0,1]^{n-1} \to \R^n$ be one of the $M$ maps with Lipschitz constant $L$ whose images cover $\mathcal{B}$. We split up $[0,1]^{n-1}$ into $L_1^{n-1}$ subcubes of side length $1/L_1$, where $L_1 := \lfloor L \rfloor + 1$. Let $C$ be one of those subcubes. Then $\Phi(C)$ has diameter at most $\sqrt{n-1} L/L_1 \leq \sqrt{n-1}$, so it is contained in a closed ball $B_{\bf z}(2\sqrt{n-1})$ of radius $2\sqrt{n-1}$ centered at some point ${\bf z} \in \R^n$.

Since $D$ is bounded, it is contained in a closed zero-centered ball $B_{\bf 0}(R_D)$ of some radius $R_D$. Every point ${\bf x} \in \R^n$ with $({\bf x} + D) \cap \Phi(C) \neq \emptyset$ satisfies ${\bf x} \in B_{\bf z}(2\sqrt{n-1}) - B_{\bf 0}(R_D) = B_{\bf z}(2\sqrt{n-1} + R_D)$. 

The number of lattice points in such a ball is finite and can be bounded independently from ${\bf z}$. Therefore,
\begin{equation}\label{translate_of_fixed_set_1}|\{\lambda \in \Lambda \mid (\lambda + D) \cap \Phi(C) \neq \emptyset\}| \ll_{\Lambda, D} 1\text.\end{equation}
Moreover
\begin{equation}\label{translate_of_fixed_set_2}\Vol\{{\bf x}\in \R^n \mid (x + D) \cap \Phi(C) \neq \emptyset\} \leq \Vol B_{\bf z}(2\sqrt{n-1} + R_D) \ll_D 1\text.\end{equation}
Summing \eqref{translate_of_fixed_set_1} and \eqref{translate_of_fixed_set_2} over all $C$ and $\Phi$ yields \emph{(i)} and \emph{(ii)}.
\end{proof}

\subsection{Counting lattice points}
We will need to count lattice points in certain bounded subsets of $\R^n$ for lattices $\Lambda \subseteq \R^n$ of the form
\[\Lambda = \Lambda_1 \times \cdots \times \Lambda_r\text,\]
where each $\Lambda_i$ is a lattice in $\R^{n_i}$ and $n_1 + \cdots + n_r = n$. Then we have $\det(\Lambda) = \det(\Lambda_1) \cdots \det(\Lambda_r)$, and the successive minima (with respect to the unit ball) of $\Lambda$ are just the successive minima of $\Lambda_1$, $\ldots$, $\Lambda_r$. Several authors (e.g. \cite{Christensen2008, Masser2006}) provide counting results where the first successive minimum is reflected in the error term, by making an argument from \cite[Chapter VI, Theorem 2]{Lang1994} explicit. For our application, we need the error term to reflect information about all the lattices $\Lambda_i$, which is accomplished with the help of a theorem by Widmer.
\begin{theorem}\cite[Theorem 5.4]{Widmer2010}\label{widmer}
Let $\Lambda$ be a lattice in $\R^n$ with successive minima (with respect to the unit ball) $\lambda_1$, $\ldots$, $\lambda_n$. Let $\mathcal{B}$ be a bounded set in $\R^n$ with boundary $\partial \mathcal{B} \in \Lip(n, M, L)$. Then $\mathcal{B}$ is measurable, and moreover
\[\left|\left|\mathcal{B} \cap \Lambda\right| - \frac{\Vol \mathcal{B}}{\det \Lambda} \right| \leq c_0(n) M \max_{0\leq k<n}\frac{L^k}{\lambda_1\cdots \lambda_k}\text.\]
For $k=0$, the expression in the maximum is to be understood as $1$. Furthermore, one can choose $c_0(n) = n^{3n^2/2}$.
\end{theorem}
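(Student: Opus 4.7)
The strategy is the classical volume-approximation approach for lattice-point counting, refined by an anisotropic boundary estimate that exploits all successive minima simultaneously. I plan to proceed in three steps.

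\emph{Step 1 (Measurability).} Each of the $M$ Lipschitz maps covering $\partial\mathcal{B}$ has image of Hausdorff dimension at most $n-1$, so $\partial\mathcal{B}$ has $n$-dimensional Lebesgue measure zero. Since $\overline{\mathcal{B}}\setminus\mathcal{B}\subseteq\partial\mathcal{B}$ and $\overline{\mathcal{B}}$ is Borel, $\mathcal{B}$ differs from a Borel set by a null set and is therefore Lebesgue measurable.

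\emph{Step 2 (Reduction to a boundary count).} By classical lattice basis reduction (for instance, a Korkine--Zolotarev reduction), I choose a $\Z$-basis $v_1,\dots,v_n$ of $\Lambda$ whose vectors \emph{and} their Gram--Schmidt orthogonalizations both satisfy $|v_i|\asymp_n\lambda_i$ and $|v_i^\ast|\asymp_n\lambda_i$. Let $P:=\{\sum_i t_iv_i:0\leq t_i<1\}$, so that $\Vol P=\det\Lambda$ and $\R^n$ is the disjoint union of the translates $\lambda+P$ for $\lambda\in\Lambda$. From $\Vol\mathcal{B}=\sum_{\lambda\in\Lambda}\Vol((\lambda+P)\cap\mathcal{B})$ and the inclusions $\Lambda_\mathrm{in}\subseteq\Lambda\cap\mathcal{B}\subseteq\Lambda_\mathrm{in}\cup\Lambda_\mathrm{bd}$, with $\Lambda_\mathrm{in}:=\{\lambda:\lambda+P\subseteq\mathcal{B}\}$ and $\Lambda_\mathrm{bd}:=\{\lambda:(\lambda+P)\cap\partial\mathcal{B}\neq\emptyset\}$, a standard double-counting yields
\[
\bigl|\,|\mathcal{B}\cap\Lambda|\det\Lambda-\Vol\mathcal{B}\,\bigr|\leq|\Lambda_\mathrm{bd}|\det\Lambda.
\]
Thus the theorem reduces to showing $|\Lambda_\mathrm{bd}|\leq c_0(n)\,M\,\max_{0\leq k<n}L^k/(\lambda_1\cdots\lambda_k)$.

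\emph{Step 3 (Anisotropic count).} Fix one of the $M$ Lipschitz maps $\Phi:[0,1]^{n-1}\to\R^n$ and a subcube $C\subseteq[0,1]^{n-1}$ of side $1/N$; every $\lambda\in\Lambda$ with $(\lambda+P)\cap\Phi(C)\neq\emptyset$ lies in a single translate of $-P+B_r$, where $B_r$ denotes the closed ball of radius $r:=\sqrt{n-1}\,L/N$. The key anisotropic estimate is
\[
\bigl|\Lambda\cap(-P+B_r)\bigr|\leq c_2(n)\prod_{i=1}^n\max(1,r/\lambda_i),
\]
proved by writing a point of $-P+B_r$ as $\sum s_iv_i$ and bounding $|s_i|\leq c(n)(1+r/|v_i^\ast|)\leq c(n)(1+r/\lambda_i)$ via successive projections onto the Gram--Schmidt directions, after which one counts integer coefficient tuples. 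If $L<\lambda_n$, take $N=1$ and let $k\in\{0,\dots,n-1\}$ be the unique index with $\lambda_k\leq L<\lambda_{k+1}$ (with convention $\lambda_0:=0$); the estimate yields at most $c(n)L^k/(\lambda_1\cdots\lambda_k)$ lattice points per Lipschitz map. If $L\geq\lambda_n$, take $N:=\lceil L/\lambda_n\rceil$; the estimate gives at most $c(n)\lambda_n^{n-1}/(\lambda_1\cdots\lambda_{n-1})$ points per subcube, and multiplying by $N^{n-1}\leq(2L/\lambda_n)^{n-1}$ yields $c(n)L^{n-1}/(\lambda_1\cdots\lambda_{n-1})$. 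In both regimes the bound is $\leq c(n)\max_{0\leq k<n}L^k/(\lambda_1\cdots\lambda_k)$. Summing over the $M$ maps and carefully tracking the dimensional constants through the basis reduction and the Gram--Schmidt coordinate bounds yields the explicit value $c_0(n)=n^{3n^2/2}$.

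\emph{Main obstacle.} The crux is the anisotropic count of Step 3: a naive application of Minkowski's theorem to $-P+B_r$, using only its diameter $\mathrm{diam}(P)+r\ll\lambda_n+r$, would recover only the $k=n-1$ term. Extracting the full product $\prod_i(1+r/\lambda_i)$ depends on the basis being well-behaved in the Gram--Schmidt sense simultaneously with the length bound $|v_i|\ll_n\lambda_i$, and it is in guaranteeing and exploiting this compatibility that the bulk of the work, and the large constant $n^{3n^2/2}$, arise.
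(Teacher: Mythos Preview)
The paper does not prove this theorem at all: it is quoted verbatim from \cite[Theorem~5.4]{Widmer2010} and used as a black box to derive Corollary~\ref{widmer_corollary} and Lemma~\ref{lattice_pt_homog_exp}. There is therefore no ``paper's own proof'' to compare your attempt against.

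That said, your sketch is a reasonable outline of how results of this type are established, and is in spirit close to Widmer's original argument: reduce to counting lattice points near the boundary, cover the boundary by images of Lipschitz maps, dice each image into pieces of controlled diameter, and count lattice translates of a fundamental cell meeting each piece via an anisotropic estimate in coordinates adapted to a reduced basis. Two places deserve more care if you want a complete proof rather than a plan. First, in Step~3 the set whose lattice points you count is a translate of $-P+B_r$, and $P$ itself has diameter comparable to $\lambda_n$; the bound $\prod_i\max(1,r/\lambda_i)$ does not follow from a crude diameter argument but genuinely requires the coordinate-wise projection onto the Gram--Schmidt directions you allude to, and you should state explicitly that each translate of $P$ contains exactly one lattice point so that the ``$+P$'' contributes only the factor $1$ in each $\max(1,\cdot)$. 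Second, the claim that the constants can be tracked to yield exactly $n^{3n^2/2}$ is asserted rather than shown; this is where most of the technical work in \cite{Widmer2010} lies, and it is not something one can simply read off from the sketch.
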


Let $\lambda_{i1} \leq \cdots \leq \lambda_{in_i}$ be the successive minima of $\Lambda_i$, and assume that the $\Lambda_i$ are ordered in such a way that $\lambda_{11} \leq \lambda_{21} \leq \cdots \leq \lambda_{r1}$ holds.

\begin{corollary}\label{widmer_corollary}
Let $\Lambda$ and $\Lambda_i$ be as above, and let $\mathcal{B}\subseteq \R^n$ be a bounded set with boundary $\partial \mathcal{B} \in \Lip(n, M, L)$. Then $\mathcal{B}$ is measurable and
\[\left|\left|\mathcal{B} \cap \Lambda\right| - \frac{\Vol \mathcal{B}}{\det \Lambda} \right| \leq c_0(n) M \prod_{i=1}^{r-1}\left(\frac{L}{\lambda_{i1}}+1\right)^{n_i}\left(\frac{L}{\lambda_{r1}}+1\right)^{n_r - 1}\text.\]
\end{corollary}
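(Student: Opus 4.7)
The plan is to invoke Widmer's Theorem~\ref{widmer} directly for the product lattice $\Lambda = \Lambda_1 \times \cdots \times \Lambda_r$ and then reshape the error term. The first substantive point is that, since the $\R^{n_i}$ sit as mutually orthogonal blocks inside $\R^n$, the successive minima of $\Lambda$ with respect to the unit ball are exactly the multiset $\{\lambda_{ij}\}$ sorted into a single nondecreasing sequence $s_1 \leq \cdots \leq s_n$. I would check this in two directions: block-wise embeddings of minimal vectors of each $\Lambda_i$ realize each $\lambda_{ij}$ as the norm of a vector in $\Lambda$, and conversely any $k$ linearly independent vectors of $\Lambda$ of norm $\leq R$ project to $d_i$ linearly independent elements of $\Lambda_i$ of norm $\leq R$ with $\sum_i d_i \geq k$, forcing $s_k \leq R$.

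With this identification, Widmer's theorem yields measurability of $\mathcal{B}$ together with the bound
\[ \left|\left|\mathcal{B}\cap\Lambda\right| - \frac{\Vol\mathcal{B}}{\det\Lambda}\right| \leq c_0(n)\, M \max_{0 \leq k < n} \frac{L^k}{s_1 \cdots s_k}. \]
Since $L/s_j \leq 1+L/s_j$ for every $j$ and each factor $1+L/s_j \geq 1$, the maximum is at most $\prod_{j=1}^{n-1}(1+L/s_j)$, and it therefore suffices to dominate this product by the right-hand side of the corollary.

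The crux is a short combinatorial rearrangement. Set $\beta_i := 1+L/\lambda_{i1}$; the ordering hypothesis yields $\beta_1 \geq \cdots \geq \beta_r \geq 1$. Let $m_i$ count those indices $j \leq n-1$ for which $s_j$ originates from $\Lambda_i$; then $\sum_i m_i = n-1$, $0 \leq m_i \leq n_i$, and $\prod_{j=1}^{n-1}(1+L/s_j) \leq \prod_{i=1}^r \beta_i^{m_i}$ since every such $s_j$ satisfies $s_j \geq \lambda_{i1}$. Setting $n'_i := n_i$ for $i < r$ and $n'_r := n_r - 1$, so that $\sum_i n'_i = n-1$ as well, one argues as follows. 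If $m_r \leq n'_r$, then $m_i \leq n'_i$ for all $i$ and $\prod \beta_i^{m_i} \leq \prod \beta_i^{n'_i}$ follows from $\beta_i \geq 1$; otherwise $m_r = n_r$ and some $m_{i_0} < n_{i_0}$ with $i_0 < r$, so transferring one unit of exponent from $r$ to $i_0$ replaces a factor $\beta_r$ by $\beta_{i_0} \geq \beta_r$ and only increases the product. Iterating reduces to the first case and completes the bound. The only step I expect to require any real care is the identification of the successive minima of $\Lambda$ with the merged sequence; the rest is a clean application of Theorem~\ref{widmer} followed by the symbolic exchange above.
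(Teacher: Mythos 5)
Your proposal is correct and follows essentially the same route as the paper: apply Widmer's theorem to the product lattice, dominate the maximum by $\prod_{j=1}^{n-1}(1+L/\lambda_j)$, and regroup the factors block by block, discarding one factor from the block containing the largest minimum via the ordering $\lambda_{11}\leq\cdots\leq\lambda_{r1}$. Your explicit verification that the successive minima of $\Lambda$ are the merged sorted $\lambda_{ij}$ is a point the paper simply asserts in its setup, and your exchange argument is a slightly more formal version of the paper's one-line division by $(L/\lambda_{i_01}+1)$.
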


\begin{proof}
 We use Theorem \ref{widmer}. Let $\lambda_1 \leq \cdots \leq \lambda_n$ be the successive minima of $\Lambda$, that is, the $\lambda_{ij}$ in correct order. Clearly, 
 \begin{align*}
  \max_{0\leq k<n}\frac{L^k}{\lambda_1\cdots \lambda_k} &\leq \prod_{j=1}^{n-1}\left(\frac{L}{\lambda_j} + 1\right) \leq\prod_{i=1}^{r}\left(\frac{L}{\lambda_{i1}} + 1\right)^{n_i}\Big/\left(\frac{L}{\lambda_{i_01}} + 1\right)\text,
 \end{align*}
 where $i_0$ is chosen such that $\lambda_{i_0n_{i_0}} = \lambda_n$. The last expression is at most
 \[\prod_{i=1}^{r-1}\left(\frac{L}{\lambda_{i1}}+1\right)^{n_i}\left(\frac{L}{\lambda_{r1}}+1\right)^{n_r - 1}\text.\]
\end{proof}

\begin{lemma}\label{lattice_pt_homog_exp}
 Let $\Lambda$ and $\Lambda_i$ be as above, and let $\mathcal{B} \subseteq \R^n$ be contained in a zero-centered ball of radius $R$. Assume, moreover, that $\partial \mathcal{B} \in \Lip(n, M, L)$, and that the following property holds for all ${\bf x} \in \mathcal{B}$:
 \begin{equation}\label{nonzero_coordinates}
 \text{If we write ${\bf x} = ({\bf x}_1, \ldots, {\bf x}_r)$ with ${\bf x}_i \in \R^{n_i}$ then ${\bf x}_i \neq {\bf 0}$ for all $i$.}\end{equation}
 Then $\mathcal{B}$ is measurable and, for all $T \geq 0$, we have
 \[\left|\left|T\mathcal{B} \cap \Lambda\right| - \frac{T^n\Vol \mathcal{B}}{\det \Lambda} \right| \ll_{n,M,R,L} \prod_{i=1}^{r-1}\left(\frac{T}{\lambda_{i1}}\right)^{n_i}\left(\frac{T}{\lambda_{r1}}\right)^{n_r-1}\text.\]
\end{lemma}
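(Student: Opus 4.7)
The plan is to apply Corollary \ref{widmer_corollary} directly to the dilate $T\mathcal{B}$, which has boundary in $\Lip(n, M, TL)$ and volume $T^n \Vol \mathcal{B}$; measurability of $\mathcal{B}$ is then part of the corollary's conclusion. This immediately produces a bound of the shape $c_0(n)M\prod_{i=1}^{r-1}(TL/\lambda_{i1}+1)^{n_i}(TL/\lambda_{r1}+1)^{n_r-1}$, which matches the desired estimate up to converting each factor $TL/\lambda_{i1}+1$ into a pure $T/\lambda_{i1}$. That conversion breaks down in the small-$T$ regime, so a case split is needed; the natural threshold is whether $T/\lambda_{r1}$ is bounded away from zero, since by the assumed ordering $\lambda_{11} \leq \cdots \leq \lambda_{r1}$, this ratio is the smallest of all $T/\lambda_{i1}$.

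Set $R' := \max(R,1)$. In the easy case $T/\lambda_{r1} \geq 1/R'$, every $T/\lambda_{i1}$ is bounded below by $1/R'$, so $TL/\lambda_{i1}+1 \leq R'(L+1)(T/\lambda_{i1})$ uniformly, and substituting back yields the claimed bound with an implied constant depending on $n,M,R,L$. In the opposite case $T/\lambda_{r1} < 1/R'$, the nonzero-coordinate hypothesis forces $T\mathcal{B} \cap \Lambda = \emptyset$: any lattice point in the intersection would be of the form $T\mathbf{x}$ with $\mathbf{x} \in \mathcal{B}$, and each block $T\mathbf{x}_r$ would be a nonzero vector of $\Lambda_r$, yielding $\lambda_{r1} \leq |T\mathbf{x}_r| \leq TR \leq TR'$, contradicting the case assumption. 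It then remains to bound the main term $T^n \Vol\mathcal{B}/\det\Lambda$; combining $\Vol\mathcal{B} \ll_{n,R} 1$ with Minkowski's second theorem applied to each $\Lambda_i$, which gives $\det\Lambda_i \gg_{n} \lambda_{i1}\cdots\lambda_{i n_i} \geq \lambda_{i1}^{n_i}$, produces $T^n \Vol\mathcal{B}/\det\Lambda \ll_{n,R} \prod_{i=1}^r(T/\lambda_{i1})^{n_i}$. Finally, $T/\lambda_{r1} < 1$ lets me trade one factor of $T/\lambda_{r1}$ for $1$, matching the target shape.

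The only genuine obstacle is the small-$T$ regime, where Widmer's counting estimate degenerates because the ``$+1$'' dominates the ``$TL/\lambda_{i1}$''. The nonzero-coordinate assumption is exactly the input that kills the lattice count there, while Minkowski's second theorem is what makes the resulting volume term fall under the same bound; gluing the two cases together gives the lemma.
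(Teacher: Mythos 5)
Your proposal is correct and follows essentially the same route as the paper's proof: the same case split on whether $T$ is small relative to $\lambda_{r1}$, the same use of the nonzero-coordinate hypothesis to show $T\mathcal{B}\cap\Lambda=\emptyset$ in the small-$T$ regime, Minkowski's second theorem to absorb the volume term there, and Corollary \ref{widmer_corollary} with the $+1$-to-$T/\lambda_{i1}$ conversion in the large-$T$ regime. The only cosmetic difference is your threshold $\max(R,1)$ in place of the paper's $R$, which changes nothing.
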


\begin{proof}
By Theorem \ref{widmer}, $\mathcal{B}$ is measurable. We start with the case where $TR < \lambda_{r1}$. Suppose that ${\bf a} = ({\bf a}_1, \ldots, {\bf a}_r) \in T\mathcal{B} \cap \Lambda$. Then  ${\bf a}_r \neq {\bf 0}$ by \eqref{nonzero_coordinates}. Therefore, $|{\bf a}| \geq |{\bf a}_r| \geq \lambda_{r1} > TR$, so ${\bf a} \notin T\mathcal{B}$, a contradiction. Hence, $|T\mathcal{B} \cap \Lambda| = 0$. Denote by $V_1$ the volume of a ball of radius $1$ in $\R^n$. Then $\Vol \mathcal{B} \leq R^n V_1$. We denote the successive minima of $\Lambda$ again by $\lambda_1$, $\ldots$, $\lambda_n$. By Minkowski's second theorem we have
\[\frac{T^n \Vol \mathcal{B}}{\det \Lambda} \leq \frac{V_1 2^n (RT)^n}{\lambda_1\cdots \lambda_n V_1} \leq 2^n R^{n-1}\prod_{i=1}^{r-1}\left(\frac{T}{\lambda_{i1}}\right)^{n_i}\left(\frac{T}{\lambda_{r1}}\right)^{n_r-1}\text.\]
Now assume $TR \geq \lambda_{r1}$. Clearly, $\Vol(T \mathcal{B}) = T^n\Vol \mathcal{B}$ and $\partial(T \mathcal{B}) \in \Lip(n, M, T L)$. To finish the proof, we use Corollary \ref{widmer_corollary} and observe that
\begin{align*}\prod_{i=1}^{r-1}\left(\frac{TL}{\lambda_{i1}}+1\right)^{n_i}\left(\frac{TL}{\lambda_{r1}}+1\right)^{n_r - 1} &\leq \prod_{i=1}^{r-1}\left(\frac{T(L + R)}{\lambda_{i1}}\right)^{n_i}\left(\frac{T(L + R)}{\lambda_{r1}}\right)^{n_r - 1}\\
&=(L + R)^{n-1} \prod_{i=1}^{r-1}\left(\frac{T}{\lambda_{i1}}\right)^{n_i}\left(\frac{T}{\lambda_{r1}}\right)^{n_r-1}\text.
\end{align*}
\end{proof}

\subsection{The basic sets}\label{subsection_basic_set}
Here, we describe the sets $\mathcal{B}$ to which Lemma \ref{lattice_pt_homog_exp} will be applied. These sets have been introduced by Schanuel \cite{Schanuel1979} and in a more general context by Masser and Vaaler \cite{Masser2006}. Our notation is similar to the one in \cite{Masser2006}. When talking about lattices, volumes, etc., we identify $\C$ with $\R^2$.

Let $\Sigma$ be the hyperplane in $\R^{r+s}$ where $x_1 + \cdots + x_{r+s} = 0$. It is well known that the map $l : K^\times \to \R^{r+s}$ defined by $l(\alpha) = (d_1 \log |\alpha^{(1)}|, \ldots, d_{r+s}\log|\alpha^{(r+s)}|)$ induces a group homomorphism of $\O_K^\times$ onto a lattice in $\Sigma$, with kernel $\mu_K$. In particular, $l$ induces a group isomorphism from $\mathcal{F}$ to $l(\O_K^\times)$. Let $F$ be a fundamental parallelotope for this lattice, and let $\delta := (d_1, \ldots, d_{r+s}) \in \R^{r+s}$. We define the vector sums
\[F(\infty) := F + \R \delta\ \text{, and }\ F(T) := F + (-\infty, \log T]\delta\ \text{, for $T > 0$.}\]
Then $F(\infty)$ is a system of representatives for the orbits of the additive action of $l(\mathcal{F}) = l(\O_K^\times)$ on $\R^{r+s}$. Let $S_F^n(T)$ be the set of all 
\[(z_{1,1}, \ldots, z_{1, n}, \ldots, z_{r+s, 1}, \ldots, z_{r+s,n}) \in (\R^n \setminus \{{\bf 0}\})^r \times (\C^n \setminus \{{\bf 0}\})^s\]
such that 
\[(d_i \log \max_{1 \leq j \leq n}\{|z_{i,j}|\})_{i=1}^{r+s} \in F(T)\text.\]
Since $F \subseteq \Sigma$ and $d_1 + \cdots + d_{r+s} = d$, this is equivalent to
\[(d_i \log \max_{1\leq j\leq n}\{|z_{i,j}|\})_{i=1}^{r+s} \in F(\infty)\ \text{ and }\ \prod_{i=1}^{r+s} \max_{1\leq j\leq n}\{|z_{i,j}|\}^{d_i} \leq T^d\text.\]
The set $S_F^n(\infty)$ is defined similarly. Here are some basic properties of $S_F^n(T)$:  
\begin{enumerate}[(i)]
 \item $S_F^n(T) = TS_F^n(1)$ is homogeneously expanding.
 \item $S_F^n(1)$ is bounded.
 \item $\partial S_F^n(1) \in \Lip(nd, M_n, L_n)$ for some $M_n$, $L_n$.
 \item $S_F^n(1)$ is measurable and $\Vol S_F^n(1) = n^q 2^{nr}\pi^{ns}R_K$. 
\end{enumerate}
Properties (i), (ii) follow directly from the definition, and (iii), (iv) are immediate consequences of \cite[Lemma 3, Lemma 4]{Masser2006}. Strictly speaking, the case $n = 1$ is not covered by \cite{Masser2006}, but the proofs remain correct without change. We need a slightly modified version: Define 
\begin{equation}\label{def_SFN*}
S_F^{n*}(T) := S_F^n(T) \cap ((\R^\times)^{nr} \times (\C^\times)^{ns})\text.
\end{equation}
Then (i) -- (iv) hold as well for $S_F^{n*}(T)$. This is clear for (i), (ii), (iv). For (iii), let $X := (\R^{nr}\times \C^{ns})\setminus((\R^\times)^{nr} \times (\C^\times)^{ns})$. Then $\partial S_F^{n*}(1) \subseteq \partial S_F^n(1) \cup (\overline{S_F^n(1)} \cap X)$. Since $\overline{S_F^n(1)}$ is bounded and $X$ is a union of finitely many proper subspaces, we have $(\overline{S_F^n(1)} \cap X) \in  \Lip(nd, M'_n, L'_n)$, for suitably chosen $M'_n$, $L'_n$, so
\[\partial S_F^{n*}(1) \in \Lip(nd, M_n + M'_n, \max\{L_n, L'_n\})\text.\]

\section{Proof of Lemma 3.1}\label{sec_proof_lemma}
Whenever we use Vinogradov's $\ll$ notation, the implicit constant may depend on $K$. Let us start by summing over $y_1$, $y_2$, $y_3$, for fixed $y_{jk}$, $y_{kj}$. Write 
\[V' := V\setminus \{1,2,3\} = \{12,21,23,32,31,13\}\text.\]
For any choice of $y_v$, $v \in V'$, we define $\xi_j := y_{jk}y_{jl}y_{kj}^2y_{lj}^2$. The height condition in \eqref{def_MRB} implies that
\[|N(y_j)^3 N(\xi_j)| = \prod_{i=1}^{r+s} |\sigma_i(y_j^3 \xi_j)|^{d_i} \leq B\text.\]
For $y_j \in \ida_j$, we obtain $|N(\xi_j)| \leq B |N(y_j)|^{-3} \leq B \Norm\ida_j^{-3}$. By our choice of $\mathcal{R}$ in \ref{subsection_fundamental_domain}, we can write the sum in Lemma \ref{main_lemma} as
\begin{align}\label{split_sum}
\sum_{\substack{{\bf y} \in \mathcal{R}(B)\\y_v \in \ida_v}}1 &= \sum_{\substack{(y_v)_{v \in V'} \in \mathcal{R}_2\\y_v \in \ida_v\\\forall j\ : \ |N(\xi_j)|\leq B\Norm\ida_j^{-3}}}\ \ \sum_{\substack{(y_1, y_2, y_3) \in \mathcal{R}_1\\y_j \in \ida_j\\\prod\limits_{i=1}^{r+s} \max\limits_j\{|\sigma_i(y_j^3 \xi_j)|\}^{d_i} \leq B}} 1.
\end{align}

\subsection{The first summation}
Here, we handle the inner sum in \eqref{split_sum}. The necessary tool is provided in Lemma \ref{lemma_y1y2y3}.
\begin{lemma}\label{det_succ_minima}
Let $\ida$ be a fractional ideal of $K$, and let $\tau$ be the linear automorphism of $\R^r\times\C^s$ (regarded as $\R^d$) given by $\tau(z_1, \ldots, z_{r+s}) = (t_1 z_1, \ldots, t_{r+s} z_{r+s})$, with $t_1$, $\ldots$, $t_{r+s} > 0$. Let $\sigma : K \to \R^r \times \C^s$ be the standard embedding. Then ${\tau} \circ {\sigma}(\ida)$ is a lattice in $\R^r \times \C^s$ of determinant
\[\det ({\tau} \circ {\sigma}(\ida)) = t_1^{d_1} \cdots t_{r+s}^{d_{r+s}}\cdot 2^{-s}\cdot \Norm(\ida_j)\cdot \sqrt{|\Delta_K|}\]
and first successive minimum $\lambda \geq (t_1^{d_1} \cdots t_{r+s}^{d_{r+s}}\cdot \Norm\ida)^{1/d}$.
\end{lemma}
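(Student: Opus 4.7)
The statement splits naturally into a determinant formula and a lower bound for the first successive minimum, which I would tackle in sequence.

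For the determinant, I would start from the standard fact that under the Minkowski embedding $\sigma: K \to \R^r \times \C^s$ (identifying $\C = \R^2$ via real and imaginary parts), $\sigma(\O_K)$ is a full-rank lattice in $\R^d$ of covolume $2^{-s}\sqrt{|\Delta_K|}$. Multiplying by the ideal index then gives $\det \sigma(\ida) = 2^{-s} \Norm(\ida) \sqrt{|\Delta_K|}$. The map $\tau$ is diagonal: each real coordinate is scaled by $t_i$ (contributing $t_i$ to the determinant), and each complex coordinate is scaled by $t_i$, which scales both the real and imaginary parts by $t_i$ and so contributes $t_i^2 = t_i^{d_i}$. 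Hence $\det\tau = \prod_{i=1}^{r+s} t_i^{d_i}$, and multiplicativity of the determinant yields the claimed covolume.

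For the successive minimum, I would pick an arbitrary nonzero $\alpha \in \ida$ and compute the Euclidean norm squared directly:
\[
|\tau\circ\sigma(\alpha)|^2 = \sum_{i=1}^{r+s} t_i^2 |\alpha^{(i)}|^2,
\]
using $|z|^2 = \Re(z)^2 + \Im(z)^2$ for each complex coordinate. The key step is weighted AM--GM applied with the weights $d_i/d$ (which sum to $1$) to the nonnegative quantities $b_i := t_i^2|\alpha^{(i)}|^2$:
\[
\sum_{i=1}^{r+s}\frac{d_i}{d}\, b_i \;\geq\; \prod_{i=1}^{r+s} b_i^{d_i/d} \;=\; \Bigl(\prod_{i=1}^{r+s} t_i^{d_i} \cdot \prod_{i=1}^{r+s} |\alpha^{(i)}|^{d_i}\Bigr)^{2/d} = \Bigl(\prod_{i=1}^{r+s} t_i^{d_i} \cdot |N_{K/\Q}(\alpha)|\Bigr)^{2/d}.
\]
Since each weight $d_i/d$ is at most $1$, the left-hand side is bounded above by $\sum_i b_i = |\tau\sigma(\alpha)|^2$. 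Finally, $\alpha \in \ida\setminus\{0\}$ means that $\ida$ divides the principal ideal $(\alpha)$ in the group of fractional ideals, so the quotient is integral and $|N_{K/\Q}(\alpha)| = \Norm(\alpha\O_K) \geq \Norm(\ida)$. Taking square roots yields $\lambda \geq (\prod t_i^{d_i} \cdot \Norm(\ida))^{1/d}$.

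The only real subtlety is choosing the right weights in AM--GM. Equal weights would not match the exponents $d_i$ needed to recover the field norm, while the ``Minkowski weights'' $d_i$ themselves would overcount the complex contributions relative to the Euclidean norm; the weights $d_i/d$ happen to achieve both goals simultaneously because they sum to $1$ and because they are bounded by $1$ termwise. Beyond that, the proof is standard: the determinant formula is essentially bookkeeping in the Minkowski embedding, and the successive minimum bound reduces, via AM--GM, to the familiar fact that the norm of any nonzero element of a fractional ideal dominates the ideal norm.
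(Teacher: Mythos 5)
Your proof is correct and follows essentially the same route as the paper: the determinant is the standard covolume computation composed with the diagonal scaling, and the successive-minimum bound is weighted AM--GM plus $|N(\alpha)|\geq\Norm\ida$. The only (cosmetic) difference is that you normalize the weights to $d_i/d$ and use $d_i/d\leq 1$ directly, whereas the paper passes through $\frac12\sum d_i b_i$ and needs $d\geq 2$ (treating $d=1$ as a trivial separate case); your version avoids that case split.
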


\begin{proof}
For $d=1$, the lemma is trivial, so we assume $d \geq 2$. Classically, ${\sigma}(\ida)$ is a lattice in $\R^r \times \C^s$ of determinant $2^{-s} \Norm(\ida_j)\sqrt{|\Delta_K|}$. Since $\tau$ is a linear automorphism of determinant $t_1^{d_1} \cdots t_{r+s}^{d_{r+s}}$, it follows immediately that ${\tau} \circ {\sigma}(\ida)$ is a lattice with the correct determinant.

For $\lambda$, we slightly generalize the argument in \cite[Lemma 5]{Masser2006} (see also \cite[Lemma 9.7]{Widmer2010}). There is an $\alpha \in \ida$ with $\lambda = |{\tau} \circ {\sigma}(\alpha)|$. By the inequality of weighted arithmetic and geometric means, we have
\begin{align*}
\lambda^2 = \sum_{i=1}^{r+s}|t_i \alpha^{(i)}|^2 &\geq \frac{1}{2}\sum_{i=1}^{r+s}d_i |t_i\alpha^{(i)}|^2
\geq\frac{d}{2}\left(\prod_{i=1}^{r+s}|t_i \alpha^{(i)}|^{d_i} \right)^\frac{2}{d} \geq(t_1^{d_1} \cdots t_{r+s}^{d_{r+s}} |N(\alpha)|)^\frac{2}{d}\text.
\end{align*}
The lemma follows upon noticing that $|N(\alpha)|\geq\Norm\ida$.
\end{proof}

\begin{lemma}\label{lemma_y1y2y3}
Given constants $C_{ij} > 0$, for $i \in \{1, \ldots, r+s\}$ and $j \in \{1, 2, 3\}$, let
\[C_j := C_{1j}^{d_1} \cdots C_{r+s,j}^{d_{r+s}}\text.\]
Let $\ida_1$, $\ida_2$, $\ida_3 \neq \{0\}$ be fractional ideals of $K$, and $\mathcal{R}_1$ a system of representatives for the orbits of $(K^\times)^3$ under the action of $\mathcal{F}$ by scalar multiplication. Define
\[M_1(T) := (\ida_1 \times \ida_2 \times \ida_3) \cap \{(y_1, y_2, y_3) \in \mathcal{R}_1 \mid \prod_{i=1}^{r+s}\max_{1\leq j\leq 3}\{C_{ij} |y_j^{(i)}|\}^{d_i} \leq T^d\}\text.\]
Then $M_1(T)$ is finite and
\[|M_1(T)| = \frac{3^q2^{3r} (2\pi)^{3s} R_K}{(\sqrt{|\Delta_k|})^3 C_1 C_2 C_3 \Norm\ida_1\Norm\ida_2\Norm\ida_3} T^{3d} + O\left(\frac{T^{3d-1}\max_j\{C_j\Norm\ida_j\}^{1/d}}{C_1C_2C_3\Norm\ida_1\Norm\ida_2\Norm\ida_3}\right)\text,\]
for all $T > 0$. The implicit $O$-constant depends only on $K$.
\end{lemma}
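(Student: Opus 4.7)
The plan is to rewrite $|M_1(T)|$ as a lattice point count in a dilate of the basic set $S_F^{3*}(1)$ from Section \ref{subsection_basic_set}, and then apply Lemma \ref{lattice_pt_homog_exp}. A first observation is that $|M_1(T)|$ is independent of the choice of $\mathcal{R}_1$, because the height condition is invariant under the diagonal $\mathcal{F}$-action (since $|N(\zeta)|=1$ for $\zeta\in\O_K^\times$) and $y_j\in\ida_j$ is preserved. I therefore take $\mathcal{R}_1$ to be the set of $(y_1,y_2,y_3)\in(K^\times)^3$ with $\bigl(d_i\log\max_j\{C_{ij}|y_j^{(i)}|\}\bigr)_{i=1}^{r+s}\in F(\infty)$; this is a valid fundamental domain because multiplying by $\zeta\in\mathcal{F}$ shifts this vector by $l(\zeta)$, and $l$ maps $\mathcal{F}$ isomorphically onto $l(\O_K^\times)$.

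Next I set up the lattice. Let $\sigma:K\to\R^r\times\C^s$ be the standard embedding, let $\tau_j$ denote the diagonal scaling by $(C_{1j},\ldots,C_{r+s,j})$, and put $\Lambda_j := \tau_j\sigma(\ida_j)$. Arranging the ambient space as $\mathbb{V}:=(\R^3)^r\times(\C^3)^s$ with coordinates $z_{i,j}:=C_{ij}\sigma_i(y_j)$, the three sublattices live in pairwise orthogonal $d$-dimensional subspaces, so $\Lambda:=\Lambda_1\times\Lambda_2\times\Lambda_3$ fits the product framework of Lemma \ref{lattice_pt_homog_exp}. With the above choice of $\mathcal{R}_1$, the map $(y_1,y_2,y_3)\mapsto(z_{i,j})$ is a bijection from $M_1(T)$ onto $\Lambda\cap T\cdot S_F^{3*}(1)$: the orbit condition matches the $F(\infty)$-constraint of $S_F^{3*}$, the height condition becomes $\prod_i\max_j|z_{i,j}|^{d_i}\le T^d$, and $z_{i,j}\ne 0$ follows from $y_j\ne 0$.

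By Lemma \ref{det_succ_minima}, $\det\Lambda_j = 2^{-s}\sqrt{|\Delta_K|}\,C_j\Norm\ida_j$ and $\lambda_{j1}\ge(C_j\Norm\ida_j)^{1/d}$. The remaining hypotheses of Lemma \ref{lattice_pt_homog_exp} follow from the discussion at the end of Section \ref{subsection_basic_set}: $S_F^{3*}(1)$ is bounded with Lipschitz boundary, and since every coordinate $z_{i,j}$ of a point in $S_F^{3*}(1)$ is nonzero, each of the three $j$-blocks is nonvanishing, so condition \eqref{nonzero_coordinates} holds. After reindexing the sublattices so that $\lambda_{11}\le\lambda_{21}\le\lambda_{31}$, Lemma \ref{lattice_pt_homog_exp} yields
\[
|M_1(T)| = \frac{T^{3d}\Vol S_F^{3*}(1)}{\det\Lambda} + O\!\left(\frac{T^{3d-1}}{\lambda_{11}^d\lambda_{21}^d\lambda_{31}^{d-1}}\right).
\]
Substituting $\Vol S_F^{3*}(1)=3^q2^{3r}\pi^{3s}R_K$ and $\det\Lambda=2^{-3s}|\Delta_K|^{3/2}\prod_j C_j\Norm\ida_j$ recovers the stated main term exactly. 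For the error, I apply the individual lower bounds $\lambda_{j1}^d\ge C_j\Norm\ida_j$ from Lemma \ref{det_succ_minima} to each of the three factors, and use that the value of $C_j\Norm\ida_j$ attached to the largest $\lambda_{31}$ is at most $\max_j\{C_j\Norm\ida_j\}$; this yields $\lambda_{11}^d\lambda_{21}^d\lambda_{31}^{d-1}\ge \prod_j(C_j\Norm\ida_j)/\max_j\{C_j\Norm\ida_j\}^{1/d}$, giving the claimed error term.

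The main obstacle, as I see it, is the bookkeeping involved in translating the $\mathcal{F}$-orbit condition on $\mathcal{R}_1$ into a fundamental-domain condition on the rescaled embedding coordinates, so that the counting reduces cleanly to the lattice-point setup in a region compatible with the product structure required by Lemma \ref{lattice_pt_homog_exp}; once this is set up, the main term and the error bound are forced by Lemmas \ref{det_succ_minima} and \ref{lattice_pt_homog_exp}.
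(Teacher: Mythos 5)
Your proposal is correct and follows essentially the same route as the paper: choose $\mathcal{R}_1$ via $S_F^{3*}(\infty)$, identify $M_1(T)$ with $\Lambda\cap S_F^{3*}(T)$ for the product lattice $\Lambda_1\times\Lambda_2\times\Lambda_3$, and apply Lemma \ref{det_succ_minima} together with Lemma \ref{lattice_pt_homog_exp}. Your explicit derivation of the error term from the ordered first minima is exactly the step the paper leaves implicit, and it is carried out correctly.
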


\begin{proof}
We notice that $|M_1(T)|$ does not depend on the choice of $\mathcal{R}_1$, since both $\ida_1 \times \ida_2 \times \ida_3$ and the height condition are invariant under scalar multiplication of $(y_1, y_2, y_3)$ by units. Hence, it is enough to prove the lemma with a specific choice of $\mathcal{R}_1$, which we construct below.

Let $\sigma : K^3 \to \R^{3r} \times \C^{3s}$ be the embedding given by $\sigma({\bf y}) = (\sigma_i({\bf y}))_{i=1}^{r+s}$. For $i \in \{1, \ldots, r+s\}$, let $\phi_i$ be the linear automorphism of $\R^3$ (if $i \leq r$) or $\C^3$ (if $i > r$) given by $\phi_i(z_1, z_2, z_3) = (C_{i1}z_1, C_{i2}z_2, C_{i3}z_3)$, and let $\phi : \R^{3r}\times \C^{3s} \to \R^{3r}\times \C^{3s}$ be the automorphism obtained by applying the $\phi_i$ component-wise. 

With $S_F^{3*}(T)$ as in \eqref{def_SFN*}, we define $\mathcal{R}_1$ as the set of all ${\bf y} \in (K^\times)^3$ such that $\phi \circ \sigma({\bf y}) \in S_F^{3*}(\infty)$. Then $\mathcal{R}_1$ is a system of representatives for the orbits of $(K^\times)^3$ under the action of $\mathcal{F}$ by scalar multiplication. Indeed, for any ${\bf y} \in (K^\times)^3$ and $\zeta \in \mathcal{F}$, we have
\[(d_i \log \max_{1 \leq j \leq 3}\{|C_{ij} \sigma_i(\zeta y_j)|\})_{i=1}^{r+s} = (d_i \log \max_{1 \leq j \leq 3}\{|C_{ij} \sigma_i(y_j)|\})_{i=1}^{r+s} + l(\zeta)\text,\]
and $F(\infty)$ is a system of representatives for the orbits of the additive action of $l(\mathcal{F})$ on $\R^{r+s}$. 

Let $\Lambda := \phi \circ \sigma(\ida_1 \times \ida_2 \times \ida_3)$. Then $\Lambda$ is a lattice in $\R^{3r} \times \C^{3s}$, and $\phi\circ\sigma$ induces a one-to-one correspondence between $M_1(T)$ and $\Lambda \cap S_F^{3*}(T)$. Therefore,
\begin{equation}\label{count_lattice_pts}
|M_1(T)| = |\Lambda \cap S_F^{3*}(T)|\text.
\end{equation}
Since $S_F^{3*}(T)$ is bounded, $M_1(T)$ is finite. To simplify the notation, we change the order of coordinates by
\[(z_{11}, z_{12}, z_{13}, \ldots, z_{r+s,1}, z_{r+s,2}, z_{r+s,3}) \mapsto (z_{11}, \ldots, z_{r+s,1}, \ldots, z_{13}, \ldots, z_{r+s,3})\text.\]
This way, $\R^{3r}\times\C^{3s}$ becomes $(\R^r \times \C^s)^3$, and $\Lambda$ becomes
\[\Lambda = \tau_1 \circ \sigma(\ida_1) \times \tau_2 \circ \sigma(\ida_2) \times \tau_3 \circ \sigma(\ida_3)\text,\]
where $\sigma : K \to \R^r \times \C^s$ is the standard embedding given by $\sigma(y) = (\sigma_i(y))_{i=1}^r$ and 
\[\tau_j(z_1, \ldots, z_{r+s}) := (C_{1j}z_1, \ldots, C_{r+s,j}z_{r+s})\text.\]
Each $\Lambda_j := \tau_j\circ\sigma(\ida_j)$ is a lattice in $\R^r \times \C^s = \R^d$. Let $\lambda_j$ be the first successive minimum of $\Lambda_j$. By Lemma \ref{det_succ_minima}, we have
\[\det\Lambda = \det\Lambda_1\cdot\det\Lambda_2\cdot\det\Lambda_3 = 2^{-3s}(\sqrt{|\Delta_K|})^3C_1C_2C_3\Norm\ida_1\Norm\ida_2\Norm\ida_3\]
and
\[\lambda_j \geq (C_j \Norm\ida_j)^{1/d}\text.\]
The lemma now follows from \eqref{count_lattice_pts}, Lemma \ref{lattice_pt_homog_exp} and the facts from \ref{subsection_basic_set}.
\end{proof}

The inner sum in \eqref{split_sum} is exactly $|M_1(T)|$ in Lemma \ref{lemma_y1y2y3}, with 
\[C_{ij} := |\sigma_i(\xi_j)|^{1/3}\text,\quad C_j := |N(\xi_j)|^{1/3}\quad\text{, and }\quad T := B^{1/(3d)}\text.\]
Observe that $C_1 C_2 C_3 = |N(\xi_1 \xi_2 \xi_3)|^{1/3} = \prod_{v \in V'}|N(y_v)|$. We define
\begin{align}\label{main_term}
\mathcal{M}(B, (\ida_v)_{v}) &:=  \sum_{\substack{(y_v)_{v \in V'} \in \mathcal{R}_2\\y_v \in \ida_v\\\forall j\ : \ |N(\xi_j)|\leq B\Norm\ida_j^{-3}}} \frac{1}{\prod_{v \in V'}|N(y_v)|}\text{, and}\\
\label{error_term}
\mathcal{R}(B, (\ida_v)_{v}) &:= \sum_{\substack{(y_v)_{v \in V'} \in \mathcal{R}_2\\y_v \in \ida_v\\\forall j\ : \ |N(\xi_j)|\leq B\Norm\ida_j^{-3}}} \frac{\max_j\{|N(\xi_j)|\}^{1/(3d)}}{\prod_{v \in V'}|N(y_v)|}\text.
\end{align}
Then \eqref{split_sum} and Lemma \ref{lemma_y1y2y3} imply
\begin{equation}\label{mainpluserror}
\begin{aligned}
\sum_{\substack{{\bf y} \in \mathcal{R}(B)\\y_v \in \ida_v}}1 &= \frac{3^q 2^{3r}(2\pi)^{3s} R_K B}{(\sqrt{|\Delta_K|})^3\Norm\ida_1\Norm\ida_2\Norm\ida_3} \mathcal{M}(B, (\ida_v)_{v})\\
&+ O\left(\frac{\max_j\{\Norm\ida_j\}^{1/d}}{\Norm\ida_1\Norm\ida_2\Norm\ida_3} B^{1-1/(3d)}\mathcal{R}(B, (\ida_v)_{v})\right)\text.
\end{aligned}
\end{equation}
Recall that the $\Norm\ida_v$ are bounded from below by a positive constant $c$ depending only on $K$. This implies, for example,
\begin{equation}\label{error_term_norm_bound}\Norm(\ida_{jk}\ida_{jl}\ida_{kj}^2\ida_{lj}^2)^{1/(3d)} \ll \prod_{v \in V'}\Norm\ida_v^{2/(3d)}\end{equation}
and
\begin{equation}\label{error_term_norm_bound_2}\Norm(\ida_j^3\ida_{jk}\ida_{jl}\ida_{kj}^2\ida_{lj}^2)^{-1} \leq c_2\text,\end{equation}
for some constant $c_2 \geq 1$ depending only on $K$.

\subsection{The error term}
With $\mathcal{R}_2$ as in Lemma \ref{lemma_R}, the term $\mathcal{R}(B, (\ida_v)_{v})$ has the form
\[\mathcal{R}(B, (\ida_v)_{v}) = \sum_{\rho \in R_\mathcal{F}}\sum_{\substack{\forall v \neq 12 \ : \ y_{v} \in R \cap \ida_{v}\\y_{12} \in \rho R \cap \ida_{12}\\\forall j\ : \ |N(\xi_j)|\leq B\Norm\ida_j^{-3}}}\frac{\max_j\{|N(\xi_j)|\}^{1/(3d)}}{\prod_{v \in V'}|N(y_v)|}\text.\]
Both $R$ and $\rho R$ are systems of representatives for $K^\times / \mathcal{F}$, so they contain exactly $\omega_K$ generators for every nonzero principal fractional ideal of $K$. Let $H_v$ be the principal fractional ideal $H_v = y_v \O_K$. The norm condition and the summand in the inner sum depend only on $(H_v)_{v \in V'}$. Therefore, the sum does not depend on $\rho$. Since $|\mathcal{R}_\mathcal{F}| = 3^q \ll 1$, we obtain
\[\mathcal{R}(B, (\ida_v)_{v}) \ll \sum_{\substack{\{0\}\neq H_v \in P_K\text{, } v \in V'\\H_v \subseteq \ida_v\\\forall j:\ \Norm(H_{jk}H_{jl}H_{kj}^2 H_{lj}^2)\leq B\Norm\ida_j^{-3}}}\frac{\max_j\{\Norm(H_{jk}H_{jl}H_{kj}^2 H_{lj}^2)\}^{1/(3d)}}{\prod_{v \in V'}\Norm(H_v)}\text.\]
We replace $H_v$ by $H_v \ida_v^{-1} \unlhd \O_K$ and use \eqref{error_term_norm_bound}, \eqref{error_term_norm_bound_2} to bound this sum by
\[\ll \frac{1}{\prod\limits_{v \in V'}\Norm(\ida_v)^{1-2/(3d)}}\sum_{\substack{\{0\}\neq H_v \unlhd \O_K\text{, } v \in V'\\H_v \in [\ida_v]^{-1}\\\forall j:\ \Norm(H_{jk}H_{jl}H_{kj}^2 H_{lj}^2)\leq c_2B}}\frac{\max_j\{\Norm(H_{jk}H_{jl}H_{kj}^2 H_{lj}^2)\}^{1/(3d)}}{\prod_{v \in V'}\Norm(H_v)}\text.\]
Let us denote the above sum by $\mathcal{R}_1(B, (\ida_v)_v)$. What follows is a rather straightforward generalization of arguments used by Heath-Brown and Moroz \cite{Heath-Brown1999} and Derenthal and Janda \cite{Derenthal2011}. By symmetry, we may assume that the maximum in the summand is taken for $j = 1$. This allows us to bound $\mathcal{R}_1(B, (\ida_v)_v)$ by
\begin{align*}&\ll \sum_{\substack{\{0\}\neq H_v \unlhd \O_K\text{, }v \in V'\\\forall j:\ \Norm(H_{jk}H_{jl}H_{kj}^2 H_{lj}^2)\leq c_2B}}\frac{1}{\Norm(H_{12}H_{13})^{1-1/(3d)}\Norm(H_{21}H_{31})^{1-2/(3d)}\Norm(H_{23}H_{32})}\\
&\ll \sum_{\substack{\{0\}\neq H_{ij} \unlhd \O_K,\ i\neq 1  \\ \Norm H_{ij}\leq c_2B}}\frac{1}{\Norm(H_{21}H_{31})^{1-2/(3d)}\Norm(H_{23}H_{32})}\sum_{\substack{\{0\}\neq U \unlhd \O_K\\ \Norm U\leq u}}\frac{d(U)}{\Norm U^{1 - 1/(3d)}}\text,\end{align*}
where $u := c_2B\Norm(H_{21}H_{31})^{-2}$ and $d$ is the divisor function for nonzero ideals. 

\begin{lemma}\label{abel_summation}
For $T \geq 1$, we have
\[\sum_{\substack{\{0\}\neq\ida \unlhd \O_K\\ \Norm\ida \leq T}} \Norm\ida^\alpha \ll \begin{cases}T^{\alpha + 1}, &\text{ if }-1 < \alpha \leq 0\\\max\{1,\log T\}, &\text{ if }\alpha = -1\text.\end{cases}\]
\end{lemma}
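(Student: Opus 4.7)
My plan is to reduce the sum to a Riemann--Stieltjes integral against the ideal-counting function and then estimate that integral.

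Let $A(x) := |\{\{0\} \neq \ida \unlhd \O_K \mid \Norm\ida \leq x\}|$ for $x \geq 0$. A classical result of Weber/Landau gives $A(x) = \kappa_K x + O(x^{1-1/d})$ as $x \to \infty$, where $\kappa_K$ is the residue at $s = 1$ of the Dedekind zeta function $\zeta_K$. All we will need is the consequence $A(x) \ll x$ valid for all $x \geq 1$, with an implicit constant depending only on $K$. (In fact for $0 \leq x < 1$ we have $A(x) = 0$, so $A(x) \ll \max\{1,x\}$ globally.)

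By partial summation,
\[\sum_{\substack{\{0\}\neq\ida\unlhd\O_K\\\Norm\ida\leq T}}\Norm\ida^\alpha = \int_{1^-}^{T}x^\alpha \, dA(x) = A(T)T^\alpha - \alpha\int_1^T A(x)\, x^{\alpha-1}\, dx,\]
where the boundary term at $1^-$ vanishes because $A(x) = 0$ for $x < 1$. Using $A(x) \ll x$, the boundary contribution is $\ll T^{\alpha+1}$, and the integral is $\ll \int_1^T x^\alpha\, dx$.

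In the range $-1 < \alpha \leq 0$, the integral evaluates to $(T^{\alpha+1} - 1)/(\alpha+1) \ll T^{\alpha+1}$, so the whole expression is $\ll T^{\alpha+1}$, as claimed. For $\alpha = -1$, the boundary term $A(T)/T \ll 1$, while $\int_1^T x^{-1}\,dx = \log T$, giving the bound $\ll \max\{1, \log T\}$ (the maximum accommodates the range $1 \leq T < e$ where $\log T$ may be small or zero).

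The main (and only) nontrivial input is the bound $A(x) \ll x$; everything else is routine partial summation. Since we only need an $O$-estimate rather than an asymptotic, even the weaker elementary bound $A(x) \ll x$ (which can be derived directly by bounding each ideal class separately via lattice-point counts in the Minkowski embedding, as done implicitly via Lemma \ref{det_succ_minima} and Minkowski's theorem) suffices, so no deep analytic input is required.
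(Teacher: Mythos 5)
Your proof is correct and takes essentially the same route as the paper's: the paper also proves this by Abel summation together with the classical bound $|\{\{0\}\neq\ida\unlhd\O_K \mid \Norm\ida\leq T\}|\ll T$. Your write-up just makes the routine partial-summation computation explicit.
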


\begin{proof}
This is a straightforward generalization of \cite[Lemma 4]{Derenthal2011}. The proof uses Abel's summation formula and the well known fact that 
\[|\{\{0\} \neq \ida \unlhd \O_K \mid \Norm\ida \leq T\}| \ll T\text.\] 
\end{proof}
In the following computation, the sums run over nonzero ideals of $\O_K$. Using Lemma \ref{abel_summation}, we obtain 
\begin{align*}
\sum_{\Norm U\leq u}\frac{d(U)}{\Norm U^{1 - 1/(3d)}} &= \sum_{\Norm U\leq u}\sum_{V \mid U}\Norm U^{-1+1/(3d)}\\
&= \sum_{\Norm V \leq u}\Norm V^{-1+1/(3d)}\sum_{\Norm U \leq u/\Norm V}\Norm U^{-1+1/(3d)}\\
&\ll \sum_{\Norm V \leq c_2 B} \Norm V^{-1+1/(3d)} (u/\Norm V)^{1/(3d)}\ll u^{1/(3d)} \log B\text.
\end{align*}
Therefore,
\begin{align*}\mathcal{R}_1(B, (\ida_v)_v) &\ll B^{1/(3d)}\log B \sum_{\substack{\{0\}\neq H_{ij} \unlhd \O_K,\ i\neq 1  \\ \Norm H_{ij}\leq c_2B}}\frac{1}{\Norm(H_{21}H_{31}H_{23}H_{32})}\\
&\ll B^{1/(3d)}(\log B)^5\text.  
\end{align*}
Having estimated $\mathcal{R}_1(B, (\ida_v)_v)$ and thus $\mathcal{R}(B, (\ida_v)_v)$, we obtain from \eqref{mainpluserror}:
\begin{equation}\label{only_main_term_left}
\begin{aligned}
\sum_{\substack{{\bf y} \in \mathcal{R}(B)\\y_v \in \ida_v}}1 &= \frac{3^q 2^{3r}(2\pi)^{3s} R_K B}{(\sqrt{|\Delta_K|})^3\Norm\ida_1\Norm\ida_2\Norm\ida_3} \mathcal{M}(B, (\ida_v)_{v})\\
&+ O\left(\frac{\max_j\{\Norm\ida_j\}^{1/d}}{\prod_j\Norm\ida_j \prod_{i\neq j}\Norm\ida_{ij}^{1-2/(3d)}} B (\log B)^5\right)\text.
\end{aligned}
\end{equation}

\subsection{The main term}
Just as before, we have
\[\mathcal{M}(B, (\ida_v)_{v}) = \sum_{\rho \in R_\mathcal{F}}\sum_{\substack{\forall v \neq 12 \ : \ y_{v} \in R \cap \ida_{v}\\y_{12} \in \rho R \cap \ida_{12}\\\forall j\ : \ |N(\xi_j)|\leq B\Norm\ida_j^{-3}}}\frac{1}{\prod_{v \in V'}|N(y_v)|}\text.\]
For all $v \in V'$, let $\idb_v \in \mathcal{C}$ with $[\idb_v] = [\ida_v]$, and $t_v \in K^\times$ with $t_v \ida_v = \idb_v$. Moreover, we define $b_j := \Norm(\ida_j^{3}\ida_{jk}\ida_{jl}\ida_{kj}^2\ida_{lj}^2)^{-1}\Norm(\idb_{jk}\idb_{jl}\idb_{kj}^2\idb_{lj}^2)$. Then \eqref{error_term_norm_bound_2} implies that 
\begin{equation}\label{error_term_norm_bound_3}
b_j \leq c_3\text{, for all }j \in \{1, 2, 3\}\text,
\end{equation}
with a constant $c_3 \geq 1$ depending only on $K$. We replace $y_v$ by $t_v y_v$ and obtain
\[\mathcal{M}(B, (\ida_v)_{v}) = \left(\prod_{v \in V'}\frac{\Norm\idb_v}{\Norm\ida_v}\right)\sum_{\rho \in \mathcal{R}_\mathcal{F}}\quad\sum_{\substack{\forall v \neq 12 \ : \ y_{v} \in t_v R \cap \idb_{v}\\y_{12} \in t_v \rho R \cap \idb_{12}\\\forall j\ : \ |N(\xi_j)|\leq b_j B}}\frac{1}{\prod_{v \in V'}|N(y_v)|}\text.\]
Again, the inner sum does not depend on the sets of representatives $t_v R$, $t_v \rho R$ for $K^\times / \mathcal{F}$. Thus, 
\begin{equation}\label{M_independent_of_fundamental_system}
\mathcal{M}(B, (\ida_v)_{v}) = 3^{q}\left(\prod_{v \in V'}\frac{\Norm\idb_v}{\Norm\ida_v}\right) \sum_{\substack{y_{v} \in R \cap \idb_{v}\text{, }v \in V'\\\forall j\ : \ |N(\xi_j)|\leq b_j B}}\frac{1}{\prod_{v \in V'}|N(y_v)|}\text,
\end{equation}
where $R$ is any system of representatives for $K^\times / \mathcal{F}$. Let $\sigma : K \to \R^r \times \C^s$ be the standard embedding, and let $S_F^1(T)$ be defined as in \ref{subsection_basic_set}. We choose $R$ to be the set of all $y \in K^\times$ with $\sigma(y) \in S_F^1(\infty)$. This is indeed a set of representatives for $K^\times / \mathcal{F}$: For any $y \in K^\times$, $\zeta \in \mathcal{F}$, we have
\[(d_i \log |\sigma_i(\zeta y)|)_{i=1}^{r+s} = (d_i \log |\sigma_i(y)|)_{i=1}^{r+s} + l(\zeta)\text,\]
and $F(\infty)$ is a system of representatives for the orbits of the additive action of $l(\mathcal{F})$ on $\R^{r+s}$. We will first consider the sum
\[\mathcal{M}_1(B, (\idb_v)_{v}) := \sum_{\substack{y_{v} \in R \cap \idb_{v}\text{, }v \in V'\\\forall j\ : \ |N(\xi_j)|\leq B}}\frac{1}{\prod_{v \in V'}|N(y_v)|}\text.\]
For any ${\bf z} \in \R^r \times \C^s$, let $N({\bf z}) := |z_1|^{d_1} \cdots |z_{r+s}|^{d_{r+s}}$. We define $M(B)$ as the set of all $({\bf z}_v)_{v \in V'} \in (\R^r \times \C^s)^6$ such that
\[\text{for all } v \in V'\text{, we have }{\bf z}_v \in S_F^1(\infty) \text{ and } N({\bf z}_v) \geq 1\text{,}\]
and
\[\text{for all } j\text{, we have }N({\bf z}_{jk})N({\bf z}_{jl})N({\bf z}_{kj})^2N({\bf z}_{lj})^2 \leq B\text.\]
Then $M(B)$ is bounded for all $B$. Let $\Lambda$ be the lattice in $(\R^r \times \C^s)^6$ defined by
\[\Lambda := \prod_{v \in V'} \sigma(\idb_v)\text.\]
By the component-wise extension of $\sigma$ to $K^6$, we obtain 
\begin{equation}\label{M1_first_equality}
\mathcal{M}_1(B, (\idb_v)_{v}) = \sum_{({\bf z}_v)_v \in \Lambda \cap M(B)}\frac{1}{\prod_{v \in V'}N({\bf z}_v)}\text.
\end{equation}
We identify $\C$ with $\R^2$ and estimate this sum by an integral. Let
\[I(B) := \left(\frac{2^s}{\sqrt{|\Delta_K|}}\right)^6 \frac{1}{\prod_{v \in V'}\Norm\idb_{v}}\int_{M(B)}\prod_{v \in V'}\frac{d{\bf z_v}}{N({\bf z}_v)}\text.\]
\begin{lemma}\label{lemma_sum_integral}
We have
\[\sum_{({\bf z}_v)_v \in \Lambda \cap M(B)}\frac{1}{\prod_{v \in V'}N({\bf z}_v)} = I(B) + O((\log B)^5)\text,\]
for $B \geq e$. The implicit $O$-constant depends on $K$.
\end{lemma}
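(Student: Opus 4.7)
The plan is to compare the six-fold lattice sum to the integral $I(B)$ by iterated one-variable Abel summation: I replace the sum over each factor lattice $\sigma(\idb_v)$ by the corresponding integral one variable at a time, and show that the accumulated error is $O((\log B)^5)$. Fubini--Tonelli applies freely throughout, since the integrand $1/\prod_v N(\mathbf{z}_v)$ is nonnegative.

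\emph{One-variable Abel estimate.} I first establish, uniformly in $\idb \in \mathcal{C}$,
\[
|\{\mathbf{z}\in\sigma(\idb)\cap S_F^1(\infty): 1\leq N(\mathbf{z})\leq T\}| = \gamma_\idb T + O(T^{1-1/d}),\quad T\geq 1,
\]
with $\gamma_\idb := 2^r\pi^s R_K/\det\sigma(\idb)$. This follows from Lemma \ref{lattice_pt_homog_exp} applied to the single lattice $\sigma(\idb)\subseteq\R^d$, using the homogeneous scaling $S_F^{1*}(T^{1/d}) = T^{1/d}S_F^{1*}(1)$, the Lipschitz and volume properties of $S_F^{1*}$ from Subsection \ref{subsection_basic_set}, and the lower bound $\lambda_1(\sigma(\idb))\gg 1$ from Lemma \ref{det_succ_minima} (since $\Norm\idb$ is uniformly bounded on the finite set $\mathcal{C}$). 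Abel summation with weight $1/N(\mathbf{z})$ then yields
\[
\sum_{\substack{\mathbf{z}\in\sigma(\idb)\cap S_F^1(\infty)\\ 1\leq N(\mathbf{z})\leq T}}\frac{1}{N(\mathbf{z})} = \gamma_\idb\log T + O(1) = \frac{1}{\det\sigma(\idb)}\int_{\substack{\mathbf{z}\in S_F^1(\infty)\\ 1\leq N(\mathbf{z})\leq T}}\frac{d\mathbf{z}}{N(\mathbf{z})} + O(1),
\]
uniformly in $\idb\in\mathcal{C}$ and $T\geq 1$.

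\emph{Iterated telescoping.} I fix an arbitrary order $v_1,\ldots,v_6$ of $V'$ and define for $k=0,\ldots,6$ the hybrid quantity
\[
J_k := \sum_{\substack{\mathbf{z}_{v_i}\in\sigma(\idb_{v_i})\\ i\leq k}}\;\int\frac{\chi_{M(B)}((\mathbf{z}_v)_v)}{\prod_{v\in V'}N(\mathbf{z}_v)}\prod_{j>k}\frac{d\mathbf{z}_{v_j}}{\det\sigma(\idb_{v_j})},
\]
where $\chi_{M(B)}$ is the indicator function of $M(B)$ and the integrations run over $\R^r\times\C^s$. Then $J_0 = I(B)$, while $J_6$ equals the sum on the left-hand side of Lemma \ref{lemma_sum_integral}. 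By the triangle inequality it suffices to show $|J_{k+1}-J_k|\ll(\log B)^5$ for each $k\in\{0,\ldots,5\}$.

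\emph{Single-step bound and main obstacle.} For fixed configurations of all variables other than $\mathbf{z}_{v_{k+1}}$: since $v_{k+1}$ appears in exactly two of the three inequalities $N(\xi_j)\leq B$, the constraint from $M(B)$ on $\mathbf{z}_{v_{k+1}}$ reduces to $\mathbf{z}_{v_{k+1}}\in S_F^1(\infty)$ together with $1\leq N(\mathbf{z}_{v_{k+1}})\leq T_{v_{k+1}}$ for an explicit $T_{v_{k+1}}$ depending on the other variables. Since $1/N(\mathbf{z}_{v_{k+1}})$ is the only factor of the integrand depending on $\mathbf{z}_{v_{k+1}}$, pulling the factor $1/\prod_{v\neq v_{k+1}}N(\mathbf{z}_v)$ outside and applying the one-variable Abel estimate gives the pointwise bound
\[
\left|\sum_{\mathbf{z}_{v_{k+1}}}\frac{\chi_{M(B)}}{\prod_{v\in V'}N(\mathbf{z}_v)} - \int\frac{\chi_{M(B)}}{\prod_{v\in V'}N(\mathbf{z}_v)}\frac{d\mathbf{z}_{v_{k+1}}}{\det\sigma(\idb_{v_{k+1}})}\right| \ll \frac{1}{\prod_{v\neq v_{k+1}}N(\mathbf{z}_v)}.
\]
Integrating/summing this pointwise bound over the remaining five variables of $J_k$ and relaxing the joint constraint $\chi_{M(B)}$ to the individual bounds $1\leq N(\mathbf{z}_v)\leq B$ (valid since each $v$ occurs in some $\xi_j$, forcing $N(\mathbf{z}_v)\leq N(\xi_j)\leq B$), the resulting five-variable expression decouples into a product of five independent one-variable sums or integrals of $1/N(\mathbf{z}_v)$, each contributing $O(\log B)$ by the Abel estimate or its integral analogue. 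Hence $|J_{k+1}-J_k|\ll(\log B)^5$, and telescoping over $k=0,\ldots,5$ completes the proof. The main technical point is verifying the uniformity of all implicit constants — they depend only on $K$ and on the bounded norms $\Norm\idb_v$ with $\idb_v\in\mathcal{C}$ — together with the careful Fubini-type bookkeeping; both are routine once the structure above is in place.
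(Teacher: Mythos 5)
Your argument is correct, but it takes a genuinely different route from the paper's. The paper proves the lemma by a two-sided sandwich: for each $v$ it replaces $1/N({\bf z}_v)$ by the average of $1/N$ over a suitably translated fundamental parallelotope of $\sigma(\idb_v)$ (translated so that $N$ is monotone across it), and then controls the exceptional configurations --- lattice points whose parallelotope leaves $S_F^1(\infty)$, meets a coordinate hyperplane, or crosses $\partial S_F^1(T^{1/d})$ --- via the Lipschitz-class lemma (Lemma \ref{translate_of_fixed_set}) together with Abel summation, once for the upper bound and once, with a dual parallelotope construction, for the lower bound. You instead telescope through the hybrid quantities $J_0=I(B),\ldots,J_6$, discretizing one variable at a time; your only analytic input is the uniform one-variable estimate $\sum_{1\le N({\bf z})\le T}1/N({\bf z})=\gamma_{\idb}\log T+O(1)$, correctly deduced by Abel summation from the Schanuel-type count $\gamma_{\idb}T+O(T^{1-1/d})$, whose main term matches the corresponding one-variable integral exactly. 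This is more modular and avoids the paper's two-sided parallelotope bookkeeping, at the cost of one point you should make explicit in the decoupling step: after fixing $w=v_{k+1}=ab$, the only product condition not involving ${\bf z}_w$ is the one for $j=c$, and it bounds only the four remaining variables carrying the index $c$; the fifth variable $y_{ba}$ occurs only in $\xi_a$ and $\xi_b$, both of which involve ${\bf z}_w$, so it is \emph{not} bounded by the constraints you retain after removing ${\bf z}_w$. The needed bound $N({\bf z}_{ba})\le B^{1/2}$ must instead be extracted from the condition $T_w\ge 1$ (equivalently $P_a\le B$ and $P_b\le B$, where $P_a$, $P_b$ are the cofactors of $N({\bf z}_w)$ in $N(\xi_a)$, $N(\xi_b)$), outside of which the inner sum-minus-integral difference vanishes identically. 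With that observed, the support of your pointwise error does decouple into five one-variable regions of the form $\{{\bf z}\in S_F^1(\infty):1\le N({\bf z})\le B\}$, each contributing $O(\log B)$, and the telescoping yields the lemma.
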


\begin{proof}
This is a generalization of \cite[Lemma 5]{Derenthal2011}. Let us fix some notation. For $v \in V'$, let $F_v$ be a fundamental parallelotope for the lattice $\sigma(\idb_v) \subseteq \R^r \times \C^s = \R^d$, and let $R_v$ be the minimal $d$-dimensional interval containing $F_v$. We denote the side lengths of $R_v$ by $l_{v,1}$, $\ldots$, $l_{v,d}$.
For any ${\bf z} = (z_1, \ldots, z_d) \in \R^d$ satisfying 
\begin{equation}\label{lemma_sum_integral_away_from_axes}
|z_i| \geq 1 + l_{v, i} \text{ for all }i \in \{1, \ldots, d\}\text, 
\end{equation}
let $R_v({\bf z})$ be the (unique) translate of $R_v$ such that ${\bf z}$ is the corner of $R_v({\bf z})$ at utmost distance from the origin, and let $F_v({\bf z})$ be the (unique) translate of $F_v$ contained in $R_v({\bf z})$. Similarly, for any ${\bf z}$ with
\begin{equation}\label{lemma_sum_integral_away_from_axes_2}
|z_i| \geq 1 \text{ for all }i \in \{1, \ldots, d\}\text, 
\end{equation}
let $R_v'({\bf z})$ be the (unique) translate of $R_v$ such that ${\bf z}$ is the corner of $R_v'({\bf z})$ closest to the origin, and let $F_v'({\bf z})$ be the (unique) translate of $F_v$ contained in $R_v'({\bf z})$. Consistently with the above definition of $N({\bf z})$ for ${\bf z} \in \R^r\times\C^s$, we let
\[N({\bf z}) := |z_1\cdots z_r(z_{r+1}^2 + z_{r+2}^2)\cdots (z_{d-1}^2 + z_d^2)|\text.\]
Since $N({\bf z}) \geq N({\bf y})$ for all ${\bf y} \in F_v({\bf z})$, we have
\begin{equation}\label{sum_smaller_than_integral}
 \frac{1}{N({\bf z})} \leq \frac{1}{\Vol F_v({\bf z})}\int_{F_v({\bf z})}\frac{d{\bf y}}{N({\bf y})} = \frac{2^s}{\sqrt{|\Delta_K|}\Norm\idb_v} \int_{F_v({\bf z})}\frac{d{\bf y}}{N({\bf y})}\text.
\end{equation}
Similarly,
\begin{equation}\label{integral_smaller_than_sum}
 \frac{1}{N({\bf z})} \geq \frac{1}{\Vol F_v'({\bf z})}\int_{F_v'({\bf z})}\frac{d{\bf y}}{N({\bf y})} = \frac{2^s}{\sqrt{|\Delta_K|}\Norm\idb_v} \int_{F_v'({\bf z})}\frac{d{\bf y}}{N({\bf y})}\text.
\end{equation}
Clearly, if ${\bf z} \neq {\bf z'} \in \sigma(\idb_v)$ with \eqref{lemma_sum_integral_away_from_axes} then $F_v({\bf z}) \cap F_v({\bf z'}) = \emptyset$. Let us first prove that
\begin{equation}\label{sum_smaller_int}\sum_{({\bf z}_v)_v \in \Lambda \cap M(B)}\frac{1}{\prod_{v \in V'}N({\bf z}_v)} \leq I(B) + O((\log B)^5)\text.\end{equation}
To this end, we define
\[E(B) := \{({\bf z}_v)_v \in M(B) \mid \text{ all }{\bf z}_v\text{ satisfy \eqref{lemma_sum_integral_away_from_axes} and }F_v({\bf z}_v) \subseteq S_F^1(\infty)\}\text,\]
and $G(B) := M(B) \setminus E(B)$. Keep in mind that $E(B)$ and $G(B)$ depend on $(\idb_v)_{v\in V'}$. For any $({\bf z}_v)_v \in \Lambda \cap E(B)$, we have $\prod_v F_v({\bf z}_v) \subseteq M(B)$. Therefore,
\begin{align*}&\sum_{({\bf z}_v)_v \in \Lambda \cap E(B)}\frac{1}{\prod_{v \in V'}N({\bf z}_v)} \leq \sum_{({\bf z}_v)_v \in \Lambda \cap E(B)}\prod_{v\in V'} \frac{2^s}{\sqrt{|\Delta_K|}\Norm\idb_v} \int_{F_v({\bf z}_v)}\frac{d{\bf y}}{N({\bf y})}\\
 &\quad \leq \left(\frac{2^s}{\sqrt{|\Delta_K|}}\right)^6 \frac{1}{\prod_{v \in V'}\Norm\idb_{v}}\sum_{({\bf z}_v)_v \in \Lambda \cap E(B)}\prod_{v \in V'}\int_{F_v({\bf z}_v)}\frac{d{\bf z_v}}{N({\bf z}_v)}\leq I(B)\text.
\end{align*}
We need to prove that
\begin{equation}\label{sum_smaller_int_error_term}
\sum_{({\bf z}_v)_v \in \Lambda \cap G(B)}\frac{1}{\prod_{v \in V'}N({\bf z}_v)} = O((\log B)^5)\text.
\end{equation}
For every $({\bf z}_v)_v \in \Lambda \cap G(B)$, there is at least one $w \in V'$ such that either 
\begin{equation}\label{sum_smaller_int_error_case_1_start}
{\bf z}_{w} \text{ does not satisfy \eqref{lemma_sum_integral_away_from_axes}}
\end{equation}
or
\begin{equation}\label{sum_smaller_int_error_case_2_start}
{\bf z}_{w} \text{ satisfies \eqref{lemma_sum_integral_away_from_axes} and } F_w({\bf z}_w)\not\subseteq S_F^1(\infty)\text.
\end{equation}
Therefore, we have
\begin{equation}\label{sum_smaller_int_split_up}
\begin{aligned}
&\sum_{({\bf z}_v)_v \in \Lambda \cap G(B)}\frac{1}{\prod_{v \in V'}N({\bf z}_v)} \leq \sum_{w \in V'}\sum_{\substack{({\bf z}_v)_v \in \Lambda \cap S_F^1(\infty)^6\\N({\bf z_v})\leq B\\\eqref{sum_smaller_int_error_case_1_start}\text{ or }\eqref{sum_smaller_int_error_case_2_start}}}\frac{1}{\prod_{v \in V'}N({\bf z}_v)}\\
&=\sum_{w \in V'}\left(\prod_{v \neq w}\sum_{\substack{{\bf z} \in \sigma(\idb_v) \cap S_F^1(\infty)\\N({\bf z})\leq B}}\frac{1}{N({\bf z})}\right)\sum_{\substack{{\bf z}\in \sigma(\idb_w)\cap S_F^1(\infty)\\N({\bf z})\leq B\\\eqref{sum_smaller_int_error_case_1_start}\text{ or }\eqref{sum_smaller_int_error_case_2_start}\text{ for }{\bf z}}}\frac{1}{N({\bf z})}\text.
\end{aligned}
\end{equation}
Now
\begin{equation}\label{sum_smaller_int_easy_part}
\sum_{\substack{{\bf z} \in \sigma(\idb_v) \cap S_F^1(\infty)\\N({\bf z})\leq B}}\frac{1}{N({\bf z})} = \omega_K \sum_{\substack{\{0\} \neq H \in P_K\\ H \subseteq \idb_v\\\Norm H \leq B}}\frac{1}{\Norm H}\leq\sum_{\substack{\{0\}\neq H \unlhd \O_K\\\Norm H \leq B}}\frac{1}{\Norm H} \ll \log B\text,
\end{equation}
by Lemma \ref{abel_summation}. Moreover, we write
\begin{equation}\label{sum_smaller_int_abel_sum}
\sum_{\substack{{\bf z}\in \sigma(\idb_w)\cap S_F^1(\infty)\\N({\bf z})\leq B\\\eqref{sum_smaller_int_error_case_1_start}\text{ or }\eqref{sum_smaller_int_error_case_2_start}\text{ for }{\bf z}}}\frac{1}{N({\bf z})} = \sum_{n=1}^B a_n \cdot \frac{1}{n}\text,
\end{equation}
with $a_n := |\{{\bf z} \in \sigma(\idb_w) \cap S_F^1(\infty) \mid N({\bf z}) = n\text{, }\eqref{sum_smaller_int_error_case_1_start}\text{ or }\eqref{sum_smaller_int_error_case_2_start}\text{ holds for }{\bf z}\}|$. We will apply the Abel sum formula, so we need to understand
\[A(T) := \sum_{n \leq T}a_n = |\{{\bf z} \in \sigma(\idb_w) \cap S_F^1(T^{1/d}) \mid \eqref{sum_smaller_int_error_case_1_start}\text{ or }\eqref{sum_smaller_int_error_case_2_start}\text{ holds for }{\bf z}\}|\text.\]
Let 
\begin{equation}\label{def_H}
H := \{{\bf z} \in \R^d \mid z_1 \cdots z_d = 0\}\text,
\end{equation}
and let $D_w$ be the $d$-dimensional interval
\begin{equation}\label{def_Dw}
D_w := [-(l_{w,1} + 1), l_{w,1} + 1] \times \cdots \times [-(l_{w,d} + 1), l_{w,d} + 1] \subseteq \R^d\text.
\end{equation}
Then any ${\bf z}$ counted by $A(T)$ satisfies $({\bf z} + D_w) \cap H \neq \emptyset$ (if \eqref{sum_smaller_int_error_case_1_start} holds) or ${\bf z} + D_w \not \subseteq S_F^1(T^{1/d})$ (if \eqref{sum_smaller_int_error_case_2_start} holds). Therefore, any such ${\bf z}$ is contained in $A_1(T) \cup A_2(T)$, where
\begin{align*}
A_1(T) :&= \{{\bf z} \in \sigma(\idb_w) \mid ({\bf z} + D_w) \cap \partial S_F^1(T^{1/d}) \neq \emptyset\}\\
&\supseteq\{{\bf z} \in \sigma(\idb_w) \cap S_F^1(T^{1/d}) \mid ({\bf z} + D_w) \not\subseteq S_F^1(T^{1/d})\}\text,
\end{align*}
and
\begin{align*}
A_2(T) :&= \{{\bf z} \in \sigma(\idb_w) \mid ({\bf z} + D_w) \cap (S_F^1(T^{1/d}) \cap H) \neq \emptyset\}\\
&\supseteq\{{\bf z} \in \sigma(\idb_w) \cap S_F^1(T^{1/d}) \mid ({\bf z} + D_w) \subseteq S_F^1(T^{1/d}) \text{, } ({\bf z}_w + D_w) \cap H \neq \emptyset\}\text.
\end{align*}
Now $\partial S_F^1(T^{1/d}) = T^{1/d} \partial S_F^1(1) \in \Lip(d, M_1, T^{1/d}L_1)$. We recall that $\idb_v \in \mathcal{C}$, so Lemma \ref{translate_of_fixed_set}, \emph{(i)}, implies that 
\[|A_1(T)| \ll M_1 (L_1 T^{1/d} + 1)^{d-1} \ll T^{(d-1)/d}\text{, for all }T \geq 1\text.\]  
Moreover, $S_F^1(T^{1/d}) \cap H = T^{1/d} (S_F^1(1) \cap H)$, and clearly $S_F^1(1) \cap H \in \Lip(d, \tilde M_1, \tilde L_1)$ for some $\tilde M_1$ and $\tilde L_1$. By Lemma \ref{translate_of_fixed_set}, \emph{(i)}, 
\[|A_2(T)| \ll \tilde M_1 (\tilde L_1 T^{1/d} + 1)^{d-1} \ll T^{(d-1)/d}\text{, for all }T \geq 1\text.\]  
Therefore, $A(T) \ll T^{(d-1)/d}$, for $T \geq 1$. The Abel sum formula yields 
\begin{equation*}
\sum_{n=1}^B a_n \cdot \frac{1}{n} = A(B)/B + \int_{t=1}^B A(t)/t^2 dt \ll B^{-1/d} + \int_{t=1}^B t^{-(1+1/d)}dt \ll 1\text.
\end{equation*}
With \eqref{sum_smaller_int_split_up}, \eqref{sum_smaller_int_easy_part}, \eqref{sum_smaller_int_abel_sum}, we see that \eqref{sum_smaller_int_error_term} holds, which finishes the proof of \eqref{sum_smaller_int}. Let us prove the other inequality, that is
\begin{equation}\label{int_smaller_sum}I(B) \leq \sum_{({\bf z}_v)_v \in \Lambda \cap M(B)}\frac{1}{\prod_{v \in V'}N({\bf z}_v)} + O((\log B)^5)\text.\end{equation}
For every $v \in V'$ and every ${\bf z} \in \R^d$ satisfying \eqref{lemma_sum_integral_away_from_axes}, there is a unique $\lambda_v({\bf z}) \in \sigma(\idb_v)$ with \eqref{lemma_sum_integral_away_from_axes_2} such that ${\bf z} \in F_v'(\lambda_v({\bf z}))$. 
In a similar way as above, we define
\[E'(B):=\{({\bf z}_v)_v \in M(B) \mid \text{ all } {\bf z}_v \text{ satisfy \eqref{lemma_sum_integral_away_from_axes} and } \lambda_v({\bf z}_v) \in S_F^1(\infty)\}\text,\]
and $G'(B) := M(B) \setminus E'(B)$. Both $E'(B)$ and $G'(B)$ are clearly measurable. For any $({\bf z}_v)_v$ in $E'(B)$, the point $(\lambda_v({\bf z}_v))_v$ is the unique element of $\Lambda \cap M(B)$ with ${\bf z}_v \in F_v'(\lambda_v({\bf z}_v))$ for all $v \in V'$. With this and \eqref{integral_smaller_than_sum}, we obtain
\begin{align}
\frac{2^{6s}}{(\sqrt{|\Delta_K|})^6\prod\limits_{v \in V'}\Norm\idb_{v}}\int\limits_{E'(B)}\prod_{v \in V'}\frac{d{\bf z_v}}{N({\bf z}_v)}&\leq
\sum_{\substack{(\lambda_v)_v\in\\ \Lambda \cap M(B)}}\prod_{v \in V'}\frac{2^s}{\sqrt{|\Delta_K|}\Norm\idb_v} \int\limits_{F_v'({\bf \lambda_v})}\frac{d{\bf z}}{N({\bf z})}\nonumber\\&\leq \sum_{(\lambda_v)_v \in \Lambda \cap M(B)}\frac{1}{\prod_{v \in V'}N(\lambda_v)}\text.\label{int_smaller_sum_easy_part}
\end{align}
We need to prove that
\begin{equation}\label{int_smaller_sum_hard_part}
 \left(\frac{2^s}{\sqrt{|\Delta_K|}}\right)^6 \frac{1}{\prod_{v \in V'}\Norm\idb_{v}}\int_{G'(B)}\prod_{v \in V'}\frac{d{\bf z_v}}{N({\bf z}_v)} = O((\log B)^5)\text.
\end{equation}
For every $({\bf z}_v)_v \in G'(B)$, there is some $w \in V'$ such that either
\begin{equation}\label{int_smaller_sum_error_case_1_start}
 {\bf z}_{w} \text{ does not satisfy \eqref{lemma_sum_integral_away_from_axes}}
\end{equation}
or
\begin{equation}\label{int_smaller_sum_error_case_2_start}
 {\bf z}_{w} \text{ satisfies \eqref{lemma_sum_integral_away_from_axes} and } \lambda_w({\bf z}_w) \notin S_F^1(\infty)\text.
\end{equation}
Similarly to \eqref{sum_smaller_int_split_up}, we obtain
\begin{equation}\label{int_smaller_sum_split_up}
\int\limits_{G'(B)}\prod_{v \in V'}\frac{d{\bf z_v}}{N({\bf z}_v)} \leq \sum_{w \in V'}\left(\prod_{v \neq w}\int\limits_{\substack{{\bf z} \in S_F^1(\infty)\\1 \leq N({\bf z}) \leq B}}\frac{d{\bf z}}{N({\bf z})}\right)\int_{\substack{{\bf z} \in S_F^1(\infty)\\1 \leq N({\bf z}) \leq B\\\text{\eqref{int_smaller_sum_error_case_1_start} or \eqref{int_smaller_sum_error_case_2_start} for }{\bf z}}}\frac{d{\bf z}}{N({\bf z})}\text.
\end{equation}
We denote the Lebesgue measure on $\R$, $\R^d$ by $m_1$, $m_d$. The restriction of $N$ to $S_F^1(\infty)$ defines a measurable function $N_1 : S_F^1(\infty) \to \R$. Since 
\[(m_d \circ N_1^{-1})((a,b]) = \Vol S_F^1(b^{1/d}) - \Vol S_F^1(a^{1/d}) = (b-a) \Vol S_F^1(1)\text,\]
for all $0 < a \leq b \in \R$, we obtain $m_d \circ N_1^{-1} = \Vol S_F^1(1) m_1$ on $\R^{>0}$. Therefore, 
\begin{equation}\label{integral_1_over_norm_transform}
\int\limits_{\substack{{\bf z}\in S_F^1(\infty)\\1 \leq N({\bf z}) \leq B}}\frac{d{\bf z}}{N({\bf z})} = \int\limits_{N_1^{-1}([1,B])}\frac{dm_d}{N_1({\bf z})} = \int\limits_{[1,B]}\frac{1}{t}d(m_d \circ N_1^{-1}) = \Vol S_F^1(1) \log B\text.
\end{equation}
Let $A(T) := \{{\bf z} \in S_F^1(\infty) \mid 1 \leq N({\bf z}) \leq T \text{, \eqref{int_smaller_sum_error_case_1_start} or \eqref{int_smaller_sum_error_case_2_start} holds for }{\bf z}\}$. Then $A(T)$ is measurable for all $T$ and the restriction of $N$ to $A(B)$ defines a measurable function $N_2 : A(B) \to [1, B]$. For any $E \subseteq [1, B]$ with $m_1(E) = 0$, we have $N_2^{-1}(E) \subseteq N_1^{-1}(E)$ and $(m_d \circ N_1^{-1})(E) = 0$. Thus, $m_d \circ N_2^{-1}$ is absolutely continuous. With the distribution function $F(T) := (m_d \circ N_2^{-1})([1, T])$, we obtain 
\begin{equation}\label{integral_transform}
\int_{A(B)} \frac{d{\bf z}}{N({\bf z})} = \int_{N_2^{-1}([1,B])}\frac{dm_d}{N_2({\bf z})} = \int_{[1,B]}\frac{1}{t}d(m_d \circ N_2^{-1}) = \int_1^B\frac{1}{t}dF(t)\text.
\end{equation}
Integration by parts for the Stieltjes integral on the right-hand side suggests that we need to find a suitable bound for $F(T)$. Clearly,
\[F(T) = \Vol(N_2^{-1}([1,T])) = \Vol A(T)\text.\]
With $H$, $D_w$ as in \eqref{def_H}, \eqref{def_Dw}, let
\begin{align*}
A_1(T) &:= \{{\bf z} \in \R^d \mid ({\bf z} + D_w) \cap \partial S_F^1(T^{1/d}) \neq \emptyset\}\text{, and}\\
A_2(T) &:= \{{\bf z} \in \R^d \mid ({\bf z} + D_w) \cap (\overline{S_F^1(T^{1/d})} \cap H) \neq \emptyset\}\text.
\end{align*}
A similar argument to before shows that $A(T) \subseteq A_1(T) \cup A_2(T)$. We already know that $\partial S_F^1(T^{1/d}) \in \Lip(d, M_1, T^{1/d} L_1)$ and $S_F^1(T^{1/d}) \cap H \in \Lip(n, \tilde M_1, T^{1/d} \tilde L_1)$. The same holds of course for the closure. By Lemma \ref{translate_of_fixed_set}, \emph{(ii)}, we obtain
\[\Vol A_1(T) \ll T^{(d-1)/d}\text{, }\Vol A_2(T) \ll T^{(d-1)/d}\text{, for }T\geq 1\text,\]
and thus $F(T) \ll T^{(d-1)/d}$, for $T \geq 1$. Integration by parts gives
\[\int_1^B\frac{1}{t}dF(t) = F(B)/B - F(1) - \int_1^B F d\frac{1}{t} \ll B^{-1/d} + \int_1^B t^{-(1+1/d)}dt \ll 1\text.\]
With \eqref{int_smaller_sum_split_up}, \eqref{integral_1_over_norm_transform} and \eqref{integral_transform}, we obtain \eqref{int_smaller_sum_hard_part}. Together with \eqref{int_smaller_sum_easy_part} this gives \eqref{int_smaller_sum}.
\end{proof}

\begin{lemma}\label{compute_IB}
We have
\[I(B) = \frac{1}{4\cdot 6!}\left(\frac{2^r(2\pi)^sR_K}{\sqrt{|\Delta_K|}}\right)^6 \frac{1}{\prod_{v \in V'}\Norm\idb_{v}}(\log B)^6\text.\]
\end{lemma}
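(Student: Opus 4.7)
The plan is to reduce the 6-fold integral to the volume of a standard polytope in $\R^6$ and then compute that volume.

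First, observe that both the integrand $1/N({\bf z}_v)$ and every constraint defining $M(B)$ depend on ${\bf z}_v$ only through $N({\bf z}_v) =: t_v$. Hence, the pushforward identity $m_d \circ N_1^{-1} = \Vol S_F^1(1) \cdot m_1$ on $\R^{>0}$ (already used in the proof of Lemma \ref{lemma_sum_integral}), applied once per $v \in V'$, yields
\[
\int_{M(B)}\prod_{v \in V'}\frac{d{\bf z}_v}{N({\bf z}_v)} = (\Vol S_F^1(1))^6 \int_{M'(B)} \prod_{v \in V'}\frac{dt_v}{t_v},
\]
where $M'(B) := \{(t_v) \in [1,\infty)^6 : t_{jk}t_{jl}t_{kj}^2t_{lj}^2 \le B \text{ for all } (j,k,l) \in A\}$.

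Next, substitute $u_v := \log t_v$, so that $du_v = dt_v/t_v$ and the integral becomes the Lebesgue volume of
\[
\mathcal{P}(\log B) := \bigl\{(u_v) \in \R^6_{\geq 0} : u_{jk} + u_{jl} + 2u_{kj} + 2u_{lj} \leq \log B \text{ for all } (j,k,l) \in A\bigr\}.
\]
By homogeneity, $\Vol\mathcal{P}(\log B) = (\log B)^6 \cdot \Vol\mathcal{P}(1)$. Inserting $\Vol S_F^1(1) = 2^r\pi^sR_K$ from Section \ref{subsection_basic_set} and combining with the prefactor $(2^s/\sqrt{|\Delta_K|})^6/\prod_v\Norm\idb_v$ in $I(B)$, the claim reduces to the purely combinatorial identity $\Vol\mathcal{P}(1) = 1/(4\cdot 6!)$.

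To compute $\Vol\mathcal{P}(1)$, I would change variables from $(u_{12}, u_{23}, u_{31})$ to the three linear forms $(L_1, L_2, L_3)$ cutting out the defining inequalities. The coefficient matrix is the circulant $I + 2P$ (with $P$ the cyclic shift of three coordinates), whose determinant equals $9$, so the Jacobian is $1/9$ and $(L_1, L_2, L_3)$ varies over $[0,1]^3$. The non-negativity of $u_{12}, u_{23}, u_{31}$ then becomes three affine constraints on $(u_{13}, u_{21}, u_{32}) \in \R^3_{\geq 0}$ and on $(L_1, L_2, L_3)$. Integrating out the three remaining $u$-variables at each fixed $L$ produces a piecewise polynomial function in $L$; integrating this over $[0,1]^3$, controlled by the $S_3$-symmetry of the configuration, yields the stated constant $1/(4\cdot 6!)$.

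The main obstacle is this last step: $\mathcal{P}(1)$ has nine bounding inequalities and, while it enjoys an $S_3$-symmetry, it is neither a simplex nor a product of simpler polytopes, so the explicit computation requires a careful case analysis of which constraints are active. As a sanity check, the value $1/(4\cdot 6!)$ is exactly what is needed to reconcile the formula for $c_K$ in the main theorem with Peyre's factorization $c_K = \gamma_{\mathcal{K}^{-1}}(U)\delta_{\mathcal{K}^{-1}}(U)\tau_{\mathcal{K}^{-1}}(U)/6!$ computed in \cite[Section 3]{Derenthal2011}, where $\gamma_{\mathcal{K}^{-1}}(U) = 1/36$; the various powers of $9$, $2$, and $\pi$ appearing in the Tamagawa number compensate precisely.
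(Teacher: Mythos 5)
Your reduction coincides with the paper's own argument: the paper also pushes the measure forward under $f(({\bf z}_v)_v)=(N({\bf z}_v))_v$, using $m_{6d}\circ f^{-1}=(\Vol S_F^1(1))^6\,m_6$ on $(\R^{\geq 0})^6$, to get
\[
\int_{M(B)}\prod_{v\in V'}\frac{d{\bf z}_v}{N({\bf z}_v)}=(\Vol S_F^1(1))^6\int_{M_{\Q}(B)}\prod_{v\in V'}\frac{dt_v}{t_v}\text,
\]
where $M_{\Q}(B)$ is the set of $(t_v)\in[1,\infty)^6$ with $t_{jk}t_{jl}t_{kj}^2t_{lj}^2\leq B$ for all $(j,k,l)\in A$; your substitution $u_v=\log t_v$ turns this into $(\Vol S_F^1(1))^6\Vol\mathcal{P}(\log B)$, and your bookkeeping with $\Vol S_F^1(1)=2^r\pi^s R_K$ and the prefactor of $I(B)$ is correct. (Your opening remark that the constraints of $M(B)$ depend only on $N({\bf z}_v)$ should be read as ``after restricting to $S_F^1(\infty)^6$'', since the condition ${\bf z}_v\in S_F^1(\infty)$ itself is what the pushforward identity absorbs; this is a wording issue, not an error.) So everything correctly reduces to the single identity $\Vol\mathcal{P}(1)=1/(4\cdot 6!)$.

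That identity is the genuine gap: you explicitly do not prove it. The circulant change of variables you propose is a reasonable start (the determinant of $I+2P$ is indeed $1+2^3=9$), but the resulting case analysis over the active constraints is exactly the nontrivial content of the lemma, and you stop before doing it. The ``sanity check'' against Peyre's constant cannot close this gap, because verifying that $c_K$ agrees with the Batyrev--Tschinkel/Peyre prediction is the point of the theorem; using the predicted value of $\gamma_{\mathcal{K}^{-1}}(U)$ to pin down $\Vol\mathcal{P}(1)$ would make the argument circular. For what it is worth, the paper does not carry out this computation either: it cites the end of \cite{Heath-Brown1999}, where the integral $\int_{M_{\Q}(B)}\prod_v t_v^{-1}\,dt_v=\tfrac{1}{4\cdot 6!}(\log B)^6$ is evaluated. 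To make your proof complete you must either perform the polytope-volume computation in full or give such a reference.
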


\begin{proof}
Let $m_n$ denote the Lebesgue measure on $\R^n$. We define the measurable function $f : (S_F^1(\infty))^6 \to \R^6$ by $f(({\bf z}_v)_{v \in V'}) = (N({\bf z}_v))_{v\in V'}$. For any cell $E := \prod_{v \in V'}(a_v, b_v]$, with $0 < a_v \leq b_v$, we have
\begin{align*}
(m_{6d} \circ f^{-1})(E) &= \prod_{v \in V'}(\Vol S_F^1(b_v^{1/d}) - \Vol S_F^1(a_v^{1/d})) = (\Vol S_F^1(1))^6 m_6(E)\text.
\end{align*}
Thus, $m_{6d}\circ f^{-1} = (\Vol S_F^1(1))^6 m_6$ on $(\R^{\geq 0})^6$. Let
\[M_\Q(B) := \{(t_v)_{v \in V'} \in \R^6 \mid t_v \geq 1\text{ for all }v\text{ and }t_{jk}t_{jl}t_{kj}^2t_{lj}^2 \leq B \text{ for all }j\}\text.\]
Then 
\begin{align*}
\int\limits_{M(B)}\prod_{v \in V'}\frac{d{\bf z}_v}{N({\bf z}_v)} &= \int\limits_{f^{-1}(M_\Q(B))}\prod_{v\in V'}\frac{1}{f({\bf z})_v}dm_{6d} = \int\limits_{M_\Q(B)}\prod_{v \in V'}\frac{1}{t_v}d(m_{6d} \circ f^{-1})\\ &= (\Vol S_F^1(1))^6 \int\limits_{M_\Q(B)}\prod_{v \in V'}\frac{1}{t_v} dm_6 = \frac{(\Vol S_F^1(1))^6}{4 \cdot 6!}(\log B)^6\text.
\end{align*}
The last integral is computed at the end of \cite{Heath-Brown1999}.
\end{proof}
We define 
\[C_0(K) := \frac{1}{4 \cdot 6!}\left(\frac{2^r (2\pi)^s R_K}{\sqrt{|\Delta_K|}}\right)^6\quad\text{ and }\quad C(K) := 3^q C_0(K)\text.\]
Then \eqref{M1_first_equality} and the previous two lemmata imply that
\[\mathcal{M}_1(B, (\idb_v)_v) = \frac{C_0(K)}{\prod_{v \in V'}\Norm\idb_v}(\log B)^6 + O(\log B)^5\text.\]
Keep in mind that $\idb_v \in \mathcal{C}$ for all $v \in V'$. With \eqref{error_term_norm_bound_3}, \eqref{M_independent_of_fundamental_system}, we obtain
\begin{equation*}
\mathcal{M}(B, (\ida_v)_v) \leq\frac{C(K)}{\prod_{v \in V'}\Norm\ida_v}(\log B)^6 + O\left(\frac{1}{\prod_{v \in V'}\Norm\ida_v}(\log B)^5\right)\text.
\end{equation*}
Let $R := \max_j\{\Norm\ida_j\}^{1/d}\prod_{v \in V'}\Norm\ida_v^{2/(3d)}$. Then $R \geq c_4 > 0$ for some constant $c_4$ depending only on $K$. This implies in particular that $\log R \ll R$. Moreover, we have $1/(c_5R^{3d}) \leq b_j$, for some constant $c_5 \geq 1$ depending only on $K$. Therefore,
\[\mathcal{M}(B, (\ida_v)_v) \geq 3^{q}\left(\prod_{v \in V'}\frac{\Norm\idb_v}{\Norm\ida_v}\right)\mathcal{M}_1(B/(c_5 R^{3d}), (\idb_v)_v)\text.\]
Whenever $B \geq e c_5 R^{3d}$, we obtain
\begin{align*}
\mathcal{M}(B, (\ida_v)_v) &\geq \frac{C(K)}{\prod_{v \in V'}\Norm\ida_v}\log(B/(c_5 R^{3d}))^6 + O\left(\frac{1}{\prod_{v \in V'}\Norm\ida_v}\log(B/(c_5 R^{3d}))^5\right)\\
&= \frac{C(K)}{\prod_{v \in V'}\Norm\ida_v}(\log B)^6 + O(\frac{R}{\prod_{v \in V'}\Norm\ida_v}(\log B)^5)\text.
\end{align*}
This result holds as well if $e \leq B < e c_5 R^{3d}$, since then the error term dominates the main term. Therefore, 
\[\mathcal{M}(B, (\ida_v)_v) = \frac{C(K)}{\prod_{v \in V'}\Norm\ida_v}(\log B)^6 + O(\frac{R}{\prod_{v \in V'}\Norm\ida_v}(\log B)^5)\text,\]
and Lemma \ref{main_lemma} follows from \eqref{only_main_term_left}.

\bibliographystyle{plain}

\begin{thebibliography}{10}

\bibitem{Baier2011}
S.~Baier and T.~D. Browning.
\newblock Inhomogeneous cubic congruences and rational points on del pezzo
  surfaces.
\newblock to appear in J. reine angew. Math.

\bibitem{Batyrev1998}
V.~V. Batyrev and Y.~Tschinkel.
\newblock Manin's conjecture for toric varieties.
\newblock {\em J. Algebraic Geom.}, 7(1):15--53, 1998.

\bibitem{Batyrev1998b}
V.~V. Batyrev and Y.~Tschinkel.
\newblock Tamagawa numbers of polarized algebraic varieties.
\newblock {\em Ast{\'e}risque}, (251):299--340, 1998.
\newblock Nombre et r{\'e}partition de points de hauteur born{\'e}e (Paris,
  1996).
  
\bibitem{Breteche1998}
R.~de~la Bret{\`e}che.
\newblock Sur le nombre de points de hauteur born{\'e}e d'une certaine surface
  cubique singuli{\`e}re.
\newblock {\em Ast{\'e}risque}, (251):51--77, 1998.
\newblock Nombre et r{\'e}partition de points de hauteur born{\'e}e (Paris,
  1996).

\bibitem{Breteche2002}
R.~de~la Bret{\`e}che.
\newblock Nombre de points de hauteur born{\'e}e sur les surfaces de del
  {P}ezzo de degr{\'e} 5.
\newblock {\em Duke Math. J.}, 113(3):421--464, 2002.

\bibitem{Breteche2008}
R.~de~la Bret{\`e}che and T.~D. Browning.
\newblock Manin's conjecture for quartic del {P}ezzo surfaces with a conic
  fibration.
\newblock {\em Duke Math. J.}, 160:1--69, 2011.

\bibitem{Breteche2007c}
R.~de~la Bret{\`e}che, T.~D. Browning, and U.~Derenthal.
\newblock On {M}anin's conjecture for a certain singular cubic surface.
\newblock {\em Ann. Sci. {\'E}cole Norm. Sup. (4)}, 40(1):1--50, 2007.

\bibitem{Breteche2004}
R.~de~la Bret{\`e}che and {\'E}.~Fouvry.
\newblock L'{\'e}clat{\'e} du plan projectif en quatre points dont deux
  conjugu{\'e}s.
\newblock {\em J. Reine Angew. Math.}, 576:63--122, 2004.

\bibitem{Breteche2007}
R.~de~la Bret{\`e}che and P.~Swinnerton-Dyer.
\newblock Fonction z{\^e}ta des hauteurs associ{\'e}e {\`a} une certaine
  surface cubique.
\newblock {\em Bull. Soc. Math. France}, 135(1):65--92, 2007.

\bibitem{Browning2009}
T.~D. Browning and U.~Derenthal.
\newblock Manin's conjecture for a cubic surface with {$D_5$} singularity.
\newblock {\em Int. Math. Res. Not.}, (14):2620--2647, 2009.

\bibitem{Breteche2012}
R.~de~la Bret{\`e}che, T.~D. Browning, and E.~Peyre.
\newblock On {M}anin's conjecture for a family of Ch{\^a}telet surfaces.
\newblock {\em Annals of Math.}, 175:297--343, 2012.

\bibitem{Chambert-Loir2002}
A.~Chambert-Loir and Y.~Tschinkel.
\newblock On the distribution of points of bounded height on equivariant
  compactifications of vector groups.
\newblock {\em Invent. Math.}, 148(2):421--452, 2002.

\bibitem{Christensen2008}
C.~Christensen and W.~Gubler.
\newblock Der relative {S}atz von {S}chanuel.
\newblock {\em Manuscripta Math.}, 126(4):505--525, 2008.

\bibitem{Colliot1980}
J.-L. Colliot-Th{\'e}l{\`e}ne and J.-J. Sansuc.
\newblock La descente sur les vari{\'e}t{\'e}s rationnelles.
\newblock In {\em Journ{\'e}es de {G}{\'e}ometrie {A}lg{\'e}brique d'{A}ngers,
  {J}uillet 1979/{A}lgebraic {G}eometry, {A}ngers, 1979}: 223--237.
  Sijthoff \& Noordhoff, Alphen aan den Rijn, 1980.

\bibitem{Colliot1987}
J.-L. Colliot-Th{\'e}l{\`e}ne and J.-J. Sansuc.
\newblock La descente sur les vari{\'e}t{\'e}s rationnelles. {II}.
\newblock {\em Duke Math. J.}, 54(2):375--492, 1987.

\bibitem{Derenthal2011}
U.~Derenthal and F.~Janda.
\newblock Gaussian rational points on a singular cubic surface.
\newblock To appear in: {\em Torsors, étale homotopy and applications to rational points -- {P}roceedings of the ICMS workshop in {E}dinburgh, 10--14 {J}anuary 2011}.
\newblock London {M}athematical {S}ociety {L}ecture {N}ote {S}eries

\bibitem{Fouvry1998}
{\'E}.~Fouvry.
\newblock Sur la hauteur des points d'une certaine surface cubique
  singuli{\`e}re.
\newblock {\em Ast{\'e}risque}, (251):31--49, 1998.
\newblock Nombre et r{\'e}partition de points de hauteur born{\'e}e (Paris,
  1996).

\bibitem{Franke1989}
J.~Franke, Y.~I. Manin, and Y.~Tschinkel.
\newblock Rational points of bounded height on {F}ano varieties.
\newblock {\em Invent. Math.}, 95(2):421--435, 1989.

\bibitem{Heath-Brown1999}
D.~R. Heath-Brown and B.~Z. Moroz.
\newblock The density of rational points on the cubic surface
  {$X_0^3=X_1X_2X_3$}.
\newblock {\em Math. Proc. Cambridge Philos. Soc.}, 125(3):385--395, 1999.

\bibitem{Lang1994}
S.~Lang.
\newblock {\em Algebraic number theory}, volume 110 of {\em Graduate Texts in
  Mathematics}.
\newblock Springer-Verlag, New York, second edition, 1994.

\bibitem{Boudec2011a}
P.~{Le Boudec}.
\newblock Manin's conjecture for a cubic surface with $2A_2+A_1$ singularity
  type.
\newblock To appear in {\em Math. Proc. Cambridge Philos. Soc.}, 2012.

\bibitem{Masser2006}
D.~Masser and J.~D. Vaaler.
\newblock Counting algebraic numbers with large height. {II}.
\newblock {\em Trans. Amer. Math. Soc.}, 359(1):427--445, 2006.

\bibitem{Peyre1995}
E.~Peyre.
\newblock Hauteurs et mesures de {T}amagawa sur les vari{\'e}t{\'e}s de {F}ano.
\newblock {\em Duke Math. J.}, 79(1):101--218, 1995.

\bibitem{Salberger1998}
P.~Salberger.
\newblock Tamagawa measures on universal torsors and points of bounded height
  on {F}ano varieties.
\newblock {\em Ast{\'e}risque}, (251):91--258, 1998.
\newblock Nombre et r{\'e}partition de points de hauteur born{\'e}e (Paris,
  1996).

\bibitem{Schanuel1979}
S.~H. Schanuel.
\newblock Heights in number fields.
\newblock {\em Bull. Soc. Math. France}, 107(4):433--449, 1979.

\bibitem{Widmer2010}
M.~Widmer.
\newblock Counting primitive points of bounded height.
\newblock {\em Trans. Amer. Math. Soc.}, 362:4793--4829, 2010.

\end{thebibliography}

\end{document}